\documentclass[11pt,notitlepage,a4paper]{article}

\pagestyle{myheadings}
\markright{ \hfill}

\usepackage{amssymb} 
\usepackage[intlimits]{amsmath}
\usepackage{amsfonts}
\usepackage{amsthm} 

\usepackage{mathptmx}

\usepackage[utf8]{inputenc}

\usepackage{hyperref}
\hypersetup{colorlinks=true,linkcolor=RoyalPurple,citecolor=RoyalPurple}

\usepackage{cite}

\usepackage{graphicx}

\usepackage{geometry}

\usepackage{color}
\usepackage[dvipsnames]{xcolor}




\addtolength{\oddsidemargin}{1mm}
\addtolength{\textwidth}{-1mm}
\setlength\parindent{0pt}

\let\oldsqrt\sqrt
\def\sqrt{\mathpalette\DHLhksqrt}
\def\DHLhksqrt#1#2{%
\setbox0=\hbox{$#1\oldsqrt{#2\,}$}\dimen0=\ht0
\advance\dimen0-0.2\ht0
\setbox2=\hbox{\vrule height\ht0 depth -\dimen0}%
{\box0\lower0.4pt\box2}}

\allowdisplaybreaks

\newcommand{\R}{\mathbb{R}} 
\newcommand{\N}{\mathbb{N}} 

\newcommand{\supp}{\textnormal{supp}} 

\renewcommand{\phi}{\varphi}

\newcommand{\m}{m}

\newcommand{\cE}{{\mathcal E}}
\newcommand{\cF}{{\mathcal F}}

\newcommand{\cH}{{\mathcal H}}

\newcommand{\cL}{{\mathcal L}}

\newcommand{\dn}{D}

\theoremstyle{definition}
\newtheorem{defi}{Definition}[section]
\newtheorem{remark}[defi]{Remark}

\theoremstyle{plain} 
\newtheorem{thm}[defi]{Theorem}

\newtheorem{lemma}[defi]{Lemma}

\theoremstyle{definition}

\numberwithin{equation}{section} 

\title{
On fractional higher-order Dirichlet boundary value problems: between the Laplacian and the bilaplacian.
}
\author{
Alberto Salda\~{n}a\footnote{Instituto de Matemáticas, Universidad Nacional Autónoma de México, Circuito Exterior, Ciudad Universitaria, 04510 Coyoacán, CDMX, Mexico. \emph{ alberto.saldana@im.unam.mx}}
}
\date{}

\begin{document}

\maketitle

\begin{abstract}
The solutions of boundary value problems for the Laplacian and the bilaplacian exhibit very different qualitative behaviors. Particularly, the failure of general maximum principles for the bilaplacian implies that solutions of higher-order problems are less rigid and more complex. One way to better understand this transition is to study the intermediate Dirichlet problem in terms of fractional Laplacians.  This survey aims to be an introduction to this type of problems; in particular, the different pointwise notions for these operators is introduced considering a suitable natural extension of the Dirichlet boundary conditions for the fractional setting.  Solutions are obtained variationally and, in the case of the ball, via explicit kernels. The validity of maximum principles for these intermediate problems is also discussed as well as the limiting behavior of solutions when approaching the Laplacian or the bilaplacian case.
\end{abstract}

\section{Introduction}

Let $U\subset \R^N$ ($N\in\N$) be an open bounded set with smooth boundary and $\beta\in(0,1)$. The (inhomogeneous) Dirichlet boundary value problem for the \emph{Laplacian} $\Delta u = \sum_{i=1}^N \partial_{ii}u$ is to find the unique solution $u\in C^{2+\beta}(U)\cap C(\overline{U})$ of 
\begin{align}\label{lp}
 -\Delta u = f\quad \text{ in }U,\qquad  u=g\quad \text{ on }\partial U,
\end{align}
where $f\in C^\beta(U)$ and $g\in C(\partial B)$ are given data.  Analogously, the Dirichlet boundary value problem for the \emph{bilaplacian} is to find the unique solution $u\in C^{4+\beta}(U)\cap C^1(\overline{U})$ of 
\begin{align}\label{bilp}
 \Delta^2 u = f\quad \text{ in }U,\qquad  u=g\quad \text{ on }\partial U,\qquad  -\partial_\nu u=h\quad \text{ on }\partial U,
\end{align}
where $\nu$ denotes the unit outward-pointing normal vector on $\partial U$, $f\in C^\beta(U)$, $g\in C^1(\partial B)$, and $h\in C(\partial B)$ are given data.  The problems \eqref{lp} and \eqref{bilp} are at the core of the linear theory for elliptic operators, and there is an extensive study of their solutions, primarily focusing on existence, uniqueness, regularity, qualitative properties (positivity, symmetry), and representation formulas, see \cite{GT,GGS10}.  

Regarding existence and uniqueness of solutions to \eqref{lp} and \eqref{bilp}, a very powerful approach is given by \emph{variational methods}, where solutions can be found as (unique) minimizers of some energy functional on a suitable Sobolev space.  Once the existence of a solution is established, one of the most important questions regarding its qualitative properties is the following: 
\begin{align*}
\text{Given \emph{nonnegative} data, is the corresponding solution \emph{nonnegative}?} 
\end{align*}
This is sometimes called a \emph{positivity preserving property} or a maximum principle.  To be more precise, for the Laplacian this amounts to the following question: if $f\geq 0$ and $g\geq 0$, is it true that the solution $u$ of \eqref{lp} is nonnegative?, similarly for the bilaplacian: if $f\geq 0$, $g\geq 0$, and $h\geq0$, is it true that the solution $u$ of \eqref{bilp} is nonnegative? 

For the Laplacian, this positivity preserving property \emph{always} holds, and this is a formidable tool in the study of linear and nonlinear elliptic equations (and systems).  Maximum principles are at the heart of a priori bounds, symmetry characterizations, existence, nonexistence, uniqueness, multiplicity, and regularity results, among others; we refer to \cite{GT,SW12,S16,SW15,S16t,BFDST18,ST17} for a glimpse of the diversity of problems and methods in which maximum principles play an essential role. 

For the bilaplacian the situation is more delicate: maximum principles do \emph{not} hold in general and this is linked to the fact that higher-order problems are \textquotedblleft less rigid\textquotedblright, which produces a larger and more complex set of solutions, see for example \cite{GGS10,BFS16,PT01,AJSmp} and the references therein.  There are, however, some partial results regarding positivity preserving properties in the higher-order setting; for example, if $U$ is a ball (or a small perturbation of the ball), $f\geq 0$, $h\geq0$, and $g\equiv 0$, then $u\geq 0$ \cite{GGS10}.  One of the most important (and long-standing) open problems in the theory of higher-order linear elliptic problems is to understand which domains $U$ allow a positivity preserving property for \eqref{bilp} with $f\geq 0$ and $g\equiv h\equiv 0$ \cite{GGS10}.

\medskip

This paper focuses on the intermediate problem between \eqref{lp} and \eqref{bilp} by considering fractional powers of the Laplacian $(-\Delta)^s$ with $s\in(1,2)$.  This fractional higher-order Dirichlet problem has a very rich structure resulting from the mixture between the Laplacian, the bilaplacian, and new purely nonlocal phenomena.  The understanding of this complex structure is interesting in its own right, but another motivation to study these problems comes from the fact that the fractional setting offers a bridge between the two Dirichlet problems \eqref{lp} and \eqref{bilp}, one which is very well-behaved and rigid, whereas the other is laxer and less constrained by the properties of the given data. Such a bridge has the potential to offer a new perspective and novel insights on the many open problems regarding linear and nonlinear higher-order equations.

Boundary value problems for higher-order fractional powers of the Laplacian have not been studied much in the literature so far. Some known results are the following. 
General regularity results have been proved in \cite{G15:2} (see also the survey \cite{G18} and the references therein), a Poho\v zaev identity and an integration by parts formula is given in \cite{RS15}, 
a comparison between different notions of higher-order fractional operators is done in \cite{MN15}, and spectral results are obtained in \cite{G15}.  A discussion on the pointwise definition of $(-\Delta)^s$ can be found in \cite{AJS17b}, explicit integral representations of solutions in \cite{AJS16b,AJS17a,ADFJS18}, and a study of positivity preserving properties in \cite{AJSmp}. 

Our discussion below is based on the results from \cite{AJSmp,AJS17b,AJS16b,AJS17a} and is guided by the following natural questions. For $s\in(1,2)$:
\begin{itemize}
 \item [(Q1)] What would be a suitable pointwise evaluation for the operator $(-\Delta)^s$?
 \item [(Q2)] Is it possible to study higher-order fractional Dirichlet problems variationally?
 \item [(Q3)] What can be said about positivity preserving properties for $(-\Delta)^s$?
 \item [(Q4)] What is the natural extension of (inhomogeneous) Dirichlet boundary conditions for $(-\Delta)^s$ and how can we find solutions?
 \item [(Q5)] What happens with the solutions of $(-\Delta)^su=f$ in $U$ as $s\to 2$ or as $s\to 1$? Do we recover solutions of \eqref{lp} and \eqref{bilp}?
\end{itemize}

We answer the first two questions in Sections \ref{p:sec} and \ref{var:sec} in a rather general setting.  Section \ref{mp:sec} is devoted to answer (Q3) and for (Q4) and (Q5) we study in detail the case of the ball in Sections \ref{balls:sec} and \ref{asym:sec}. Finally, in Appendix \ref{A} we include a brief discussion on the composition of Green functions.

\medskip

To close this introduction we remark that this survey is focused only in the case $s\in(1,2)$ for simplicity and to fix ideas; however, most of the definitions and results discussed below are available for any $s>1$, see Remark \ref{sb2} for some comments on the case $s>2$.  We refer to \cite{AJSmp,AJS17b,AJS16b,AJS17a,ADFJS18} for more details.

\subsection{Notations}\label{Notation}

In the reminder of the paper we use the following standing notation. We fix $B:=B_1(0)$ and $B_r:=B_r(0)$ for $r>0$.  For $m\in \N_0\cup \{\infty\}$ and $U$ open we write $C^{m,0}(U)$ to denote the space of $m$-times continuously differentiable functions in $U$ and, for $\sigma\in(0,1]$ and $s=m+\sigma$, we write $C^s(U):=C^{m,\sigma}(U)$ to denote the space of functions in $C^{m,0}(U)$ whose derivatives of order $m$ are (locally) $\sigma$-H\"older continuous in $U$ or (locally) Lipschitz continuous in $U$ if $\sigma=1$. We denote by $C^s(\overline{U})$ the set of functions $u\in C^s(U)$ such that 
\begin{align}\label{Hn}
\|u\|_{C^s(U)}:=\sum_{|\alpha|\leq m}\|\partial^{\alpha}u\|_{L^{\infty}(U)}+\sum_{|\alpha|=m}\ \sup_{\substack{x,y\in U \\x\neq y}}\  \frac{|\partial^{\alpha}u(x)-\partial^{\alpha}u(y)|}{|x-y|^{\sigma}}<\infty.
\end{align}
Moreover, for $s\in(0,\infty]$, 
\begin{align*}
C^s_c(U):=\{u\in C^s(\R^N): \supp \ u\subset\subset U\}, \qquad C^s_0(U):=\{u\in C^s(\R^N): u= 0 \text{ on $\R^N\setminus U$}\},
\end{align*}
where $\supp\ u:=\overline{\{ x\in U\;:\; u(x)\neq 0\}}$ is the support of $u$.  We also write $A\subset\subset B$ to denote that $A$ is \emph{compactly contained} in $B$, that is, that $\overline{A}$ is a compact set and $\overline{A}\subset B$.
\medskip

We use $u^+:=u_+:= \max\{u,0\}$ to denote the positive part of $u$. For $\beta\in\R$ we set
\begin{equation*}
\delta(x)^{\beta}:=\left\{\begin{aligned}
&(1-|x|^2)^{\beta},&& \quad \text{if $1-|x|^2>0$,}\\
&0,&&\quad  \text{if $1-|x|^2\leq 0$.}
\end{aligned}\right.
\end{equation*}
If $\beta=1$ we simply write $\delta(x)$.  The fractional Sobolev space $H^s(\R^N)$ is given by
\begin{align*}
H^s(\R^N):=\left\{u\in L^2(\R^N)\;:\; (1+|\xi|^{2})^{\frac{s}{2}}\ \cF (u)\in L^2(\R^N)\right\}, 
\end{align*}
where $\cF$ denotes the Fourier transform and, for $U\subset \R^N$ open, the homogeneous Dirichlet fractional Sobolev space is
\begin{align*}
\cH^{s}_0(U)&:=\{u\in H^{s}(\R^N)\;:\; u\equiv 0\;\text{on $\R^N\setminus U$}\}.
\end{align*}
Furthermore, $H^s(U):=\{u\chi_{U}\::\: u\in H^s(\R^N)\}$, where $\chi_U$ is the characteristic function of $U$, namely, $\chi_U(x)=1$ if $x\in U$ and $\chi_U(x)=0$ if $x\not\in U$.  We frequently use the following normalization constants:  
\begin{align}\label{c}
\omega_N:=2\pi^{\frac{N}{2}}\ \Gamma(\frac{N}{2})^{-1},\qquad 
k_{N,s}&:=\frac{2^{1-2s}}{\omega_N{\Gamma(s)}^{2}},\qquad 
\gamma_{N,\sigma}:=\frac{2}{ \Gamma(\sigma)\,\Gamma(1-\sigma)\omega_N},
\end{align}
where $\Gamma$ denotes the usual \emph{Gamma function}.  Finally, we recall that, in dimension one ($N=1$), the boundary integral is meant in the sense $\int_{\partial B}f(\theta)\ d\theta=f(-1)+f(1)$.

\section{Pointwise evaluations}\label{p:sec}

The pointwise definition of the higher-order fractional Laplacian $(-\Delta)^s$ for $s\in(1,2)$ can be a delicate issue and some of its aspects may seem a bit counterintuitive at first glance.  Here we present three ways to understand this operator pointwisely and discuss some of their advantages and disadvantages.  The first one is a classical definition via the Fourier transform, the second one is based on a composition of operators (similarly as in the definition of bilaplacian), and finally the third one is based on higher-order finite differences. This last pointwise notion is the most general and is the one we use in the rest of the paper.  For smooth functions ($C^\infty_c(\R^N)$, for example), all these evaluations agree; but, as soon as one considers less regular elements, differences\textemdash which are crucial to study boundary value problems\textemdash appear.

We also emphasize that, in the fractional setting, the pointwise definition of the operator is \emph{closely linked} to the type of boundary conditions that is being studied. In this survey we concentrate only on \emph{Dirichlet-type} boundary conditions.  To see how different boundary conditions may require a change in the pointwise notion of $(-\Delta)^s$, we refer to \cite{roberta:neumann,nicola:robin} and the references therein, where Neumann and Robin-type boundary conditions are considered for powers $s\in(0,1)$.

\subsection{Via Fourier transform}

Fractional Laplacians can be seen as a \emph{pseudo-differential} operator, that is, they can be defined via the Fourier transform $\cF$ prescribing the \emph{symbol} of the operator, namely, for $s>0$,
\begin{align}\label{Fdef}
 (-\Delta)^s \varphi(x)=\cF^{-1}(|\cdot|^{2s}\cF(\varphi))(x)\qquad \text{ for all }\varphi\in C^\infty_c(\R^N).
\end{align}
This notion has the advantage of relating the structure and properties of the Fourier transform with the higher-order fractional Laplacian but it is a rather indirect pointwise definition, which makes it difficult to perform some explicit pointwise calculations. 

\subsection{Via a composition of operators}

The bilaplacian operator $\Delta^2$ can be simply defined by iterating the Laplacian, that is, 
\begin{align*}
\Delta^2 u(x) = (-\Delta) (-\Delta) u(x)\qquad \text{ for all } u\in C^4(\R^N).
\end{align*}
Analogously, for $s\in(1,2)$, one can define the higher-order Laplacian $(-\Delta)^s$ as a composition of $(-\Delta)$ and $(-\Delta)^{s-1}$, where $(-\Delta)^{s-1}$ is given by
\begin{align}\label{sigmadef}
 (-\Delta)^{s-1} u(x) :=  e_{N,s}\int_{\R^N}\frac{2u(x)-u(x+y)-u(x-y)}{|y|^{N+2(s-1)}}\;dy
\end{align}
with
\begin{align}\label{edef}
 e_{N,s}=-\frac{4^{s-1}\Gamma(\frac{N}{2}+s-1)}{ \pi^{\frac{N}{2}}\Gamma(1-s)}.
\end{align}
Here $e_{N,s}$ is a suitable normalization constant such that \eqref{Fdef} holds and $u$ is such that the integral \eqref{sigmadef} is finite.  The right-hand side of \eqref{sigmadef} is sometimes called a \emph{hypersingular integral}, because the singularity of the kernel $|y|^{N+2(s-1)}$ at zero is not integrable and requires some local smoothness of $u$ to guarantee integrability, for instance, that $u$ is of class $C^{2(s-1)+\alpha}$ at $x$ for some $\alpha>0$. Moreover, to ensure integrability at infinity, one must impose some growth restrictions; this is usually done by requiring that $u$ belongs to the space $\cL^1_{s-1}$, where
\begin{align}\label{R:space}
 \cL^1_{t}:=\Big\{u\in L^1_{loc}(\R^N)\;:\; \int_{\R^N}\frac{|u(x)|}{1+|x|^{N+2t}}\ dx<\infty \Big\}\qquad \text{ for any }t>0.
\end{align}
Let $U$ be an open set in $\R^N$, then the integral \eqref{sigmadef} is finite for $x\in U$ if $u\in C^{2(s-1)+\alpha}(U)\cap \cL^1_{s-1}$.  Furthermore, one can show that $(-\Delta)^{s-1} u\in C^{2+\alpha}(U)$ if $u\in C^{2s+\alpha}(U)\cap \cL^1_{s-1}$, see \cite[Proposition 2.7]{S05}.  Therefore, we can define, for $u\in C^{2s+\alpha}(U)\cap \cL^1_s$ and $x\in U$,
\begin{align}\label{itdef}
 (-\Delta)^s u(x) =(-\Delta)(-\Delta)^{s-1}u(x).
\end{align}
This pointwise evaluation is very helpful for explicit calculations, since the operator $(-\Delta)^{s-1}$ can be computed in some cases (see Appendix \ref{A} or \cite{D12,DKK15}). However, the evaluation \eqref{itdef} has the following disadvantage: to compute $(-\Delta)^{s-1} u$ one requires the growth restriction $u\in \cL^1_{s-1}$, which is not optimal for $(-\Delta)^s$ (see Theorem \ref{poisson:thm} below).

\medskip

We emphasize that the order of the operators in  \eqref{itdef} is very important and it \emph{cannot} be freely interchanged in general, namely, it is \emph{not} true that $(-\Delta)(-\Delta)^{s-1}u(x)$ equals $(-\Delta)^{s-1}(-\Delta)u(x)$; the equality holds only for smooth enough functions, which is not the case in general for solutions of Dirichlet boundary value problems, see Appendix \ref{A} for an explicit computation in this regard. Finally, we mention that other compositions such as $(-\Delta)^r(-\Delta)^t u$ with $r,t\in(0,1)$ and $r+t=s$ are not well suited for the study of boundary value problems; the reason\textemdash similarly as in the case in Appendix \ref{A}\textemdash is that, although for $u\in C^\infty_c(\R^N)$ all these pointwise notions are equivalent, solutions of boundary value problem are not regular enough to guarantee that these compositions are always well defined (note that $(-\Delta)^{s-1}(-\Delta)u(x)$ requires that $u$ is twice weakly differentiable in $\R^N$, because $(-\Delta)^{s-1}$ is a nonlocal operator).

\subsection{Via finite differences}

We now introduce the most general pointwise evaluation of the higher-order fractional Laplacian, which, similarly as in \eqref{sigmadef}, is in terms of hypersingular integrals but involves higher-order finite differences. For $s\in (1,2)$, $U\subset \R^N$ open, $\beta\in(0,1)$, $u\in \cL^1_{s}\cap C^{2s+\beta}(U)$, and $x\in U$, let
 \begin{align}\label{Ds}
(-\Delta)^s u(x):=c_{N,s}\int_{\R^N}\frac{u(x+2y)-4u(x+y)+6u(x)-4u(x-y)+u(x-2y)}{|y|^{N+2s}}\;dy,
 \end{align}
where
 \begin{align}\label{cNms:def}
c_{N,s}=\frac{
\Gamma(\frac{N}{2}+s)
}{
\pi^{\frac{N}{2}}\Gamma(-s)(1-4^{1-s})
}
\end{align}
is a normalization constant such that \eqref{Fdef} holds (see \cite[Theorem 1.9]{AJS17b} for the details).  In the following, whenever we write $(-\Delta)^s u(x)$ as a pointwise evaluation, we always mean it in the sense of \eqref{Ds}.

Explicit pointwise calculations using \eqref{Ds} are slightly more involved than those for \eqref{itdef}, and typically require some combinatorial identities, see \cite{AJS17b}. We remark that, if $u\in\cL^1_{s-1}$, then \eqref{Ds} is equivalent to \eqref{itdef} (note that $\cL^1_{s-1}\subset \cL^1_s$), we state this result next. 

\begin{lemma}[Particular case of Corollary 1.4 in \cite{AJS17b}]\label{eq:lem}
Let $\beta\in(0,1)$, $s\in(1,2)$, $U\subset\R^N$ be smooth open domain, and $u\in C^{2s+\beta}(U)\cap \cL^1_{s-1}$, then, for $x\in U$,
\begin{align*}
 (-\Delta)(-\Delta)^{s-1}u(x)&=e_{N,s}(-\Delta)\int_{\R^N}\frac{2u(x)-u(x+y)-u(x-y)}{|y|^{N+2s}}\;dy\\
 &=c_{N,s}\int_{\R^N}\frac{u(x+2y)-4u(x+y)+6u(x)-4u(x-y)+u(x-2y)}{|y|^{N+2s}}\;dy;
\end{align*}
in particular, the pointwise evaluations \eqref{Ds} and \eqref{itdef} are equivalent for $u\in C^{2s+\beta}(U)\cap \cL^1_{s-1}$.
\end{lemma}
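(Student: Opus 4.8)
The plan is to localize at the point $x$ and to treat two pieces of $u$ separately: a compactly supported, smooth piece handled by mollification, and a ``far'' piece supported away from $x$ for which the Laplacian can be made to act on the convolution kernel rather than on $u$ itself. Fix $x\in U$ and choose $\eta\in C^\infty_c(U)$ with $\eta\equiv 1$ on a ball $B_\rho(x)$ with $B_\rho(x)\subset\subset U$. Writing $u=\eta u+(1-\eta)u=:v+w$, we have $v\in C^{2s+\beta}_c(\R^N)$ and $w\in\cL^1_{s-1}$ with $w\equiv 0$ on $B_\rho(x)$. Both sides of the claimed identity are linear in $u$ (the operators $(-\Delta)^{s-1}$ and $(-\Delta)$ are linear, and so is the integral in \eqref{Ds}), so it suffices to prove the identity separately for $v$ and for $w$. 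We also recall that $(-\Delta)^{s-1}u\in C^{2+\beta}(U)$ by \cite[Proposition 2.7]{S05}, so the outer $(-\Delta)$ on the left-hand side is a genuine pointwise second derivative at $x$.

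For the compactly supported piece $v$, I would argue by approximation. Let $v_\eps:=v*\phi_\eps\in C^\infty_c(\R^N)$ be a standard mollification, so that $v_\eps\to v$ in $C^{2s+\beta'}(\R^N)$ for every $\beta'\in(0,\beta)$, with supports contained in a fixed compact subset of $U$. For the smooth functions $v_\eps$ all three pointwise notions \eqref{Fdef}, \eqref{itdef} and \eqref{Ds} coincide, since $e_{N,s}$ and $c_{N,s}$ are precisely the constants making \eqref{Fdef} hold; hence the identity holds with $v_\eps$ in place of $u$. To pass to the limit, note that \eqref{Ds} depends continuously on $C^{2s+\beta'}$-convergence of functions with support in a fixed compact set: splitting the integral at $|y|=1$, the contribution of $|y|\ge 1$ is controlled by the sup-norm, while for $|y|<1$ one uses that the fourth difference annihilates polynomials of degree $\le 3$ to bound the numerator by a constant times $\|v_\eps-v\|_{C^{2s+\beta'}}|y|^{2s+\beta'}$, integrable against $|y|^{-N-2s}$ because $s<2$. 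Moreover $(-\Delta)^{s-1}v_\eps\to(-\Delta)^{s-1}v$ in $C^{2}$ near $x$, again by \cite[Proposition 2.7]{S05} applied to $v_\eps-v$, so the left-hand sides converge as well; this settles the case of $v$.

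For the far piece $w$, the key point is that $w$ vanishes on $B_\rho(x)$, so that for $x'$ in a small ball around $x$ the integral defining $(-\Delta)^{s-1}w(x')$ in \eqref{sigmadef} has no singularity at $y=0$; after the changes of variables $z=x'\pm y$ it becomes $-2e_{N,s}\int_{\{|z-x|\ge\rho\}}w(z)\,|z-x'|^{-(N+2(s-1))}\,dz$, and since $w\in\cL^1_{s-1}$ this integral, together with all its $x'$-derivatives, converges absolutely. Hence $(-\Delta)^{s-1}w$ is smooth near $x$ and its Laplacian is computed by differentiating the \emph{kernel}, via $-\Delta_{x'}|z-x'|^{-(N+2(s-1))}=-2s(N+2s-2)|z-x'|^{-(N+2s)}$, giving $(-\Delta)(-\Delta)^{s-1}w(x)=2s(N+2s-2)e_{N,s}\int_{\R^N}\frac{w(x+y)+w(x-y)}{|y|^{N+2s}}\,dy$. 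On the other hand, since $w\equiv 0$ on $B_\rho(x)$, the fourth-difference integral \eqref{Ds} for $w$ has no singularity either, and rescaling $y\mapsto 2y$ in the terms $w(x\pm 2y)$ turns it into $c_{N,s}(2^{2s}-4)\int_{\R^N}\frac{w(x+y)+w(x-y)}{|y|^{N+2s}}\,dy$. So the identity for $w$ reduces to the constant relation $2s(N+2s-2)\,e_{N,s}=(2^{2s}-4)\,c_{N,s}$, which one checks using $(\tfrac{N}{2}+s-1)\Gamma(\tfrac{N}{2}+s-1)=\Gamma(\tfrac{N}{2}+s)$, $\Gamma(1-s)=-s\,\Gamma(-s)$, and elementary algebra with powers of $4$. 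Adding the contributions of $v$ and $w$ yields the lemma; the middle expression in the statement is then the intermediate form obtained by transferring $(-\Delta)$ onto the hypersingular representation of $(-\Delta)^{s-1}u$.

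The main obstacle I anticipate is making the two passages to a pointwise identity rigorous: under mollification one must simultaneously control the hypersingular integral \eqref{Ds} and the classical Laplacian of $(-\Delta)^{s-1}v_\eps$, which hinges on the interior regularity estimate for $(-\Delta)^{s-1}$ from \cite{S05}; and in the far case the delicate — and instructive — point is that the Laplacian lands on the kernel rather than on $w$, which is precisely why the outcome is \emph{not} $(-\Delta)^{s-1}(-\Delta)u(x)$ (an expression that need not even be defined, as $w$ is not twice weakly differentiable on $\R^N$) and why the kernel exponent shifts from $N+2(s-1)$ to $N+2s$.
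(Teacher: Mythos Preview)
The paper does not give a proof of this lemma at all; it is stated as a particular case of \cite[Corollary~1.4]{AJS17b} and simply cited. So there is no ``paper's approach'' to compare against, and your argument must be judged on its own. As such, it is correct and well organized. The decomposition $u=v+w$ into a compactly supported $C^{2s+\beta}$ piece and a piece vanishing near $x$ is the natural move; for the far piece $w$ I verified that the constant identity $2s(N+2s-2)\,e_{N,s}=(2^{2s}-4)\,c_{N,s}$ indeed follows from \eqref{edef}, \eqref{cNms:def} via $(\tfrac{N}{2}+s-1)\Gamma(\tfrac{N}{2}+s-1)=\Gamma(\tfrac{N}{2}+s)$, $\Gamma(1-s)=-s\Gamma(-s)$ and $(4^{s}-4)/(1-4^{1-s})=4^{s}$.

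Two small points worth tightening. First, to pass to the limit on the left-hand side for the mollified piece you are implicitly using a \emph{quantitative} version of \cite[Proposition~2.7]{S05}, namely a bound of the type $\|(-\Delta)^{s-1}\phi\|_{C^{2+\beta'}(K)}\le C(\|\phi\|_{C^{2s+\beta'}}+\|\phi\|_{\cL^1_{s-1}})$ applied to $\phi=v_\eps-v$; this is indeed what Silvestre's argument gives, but say so. Second, your bound ``numerator $\lesssim \|v_\eps-v\|_{C^{2s+\beta'}}|y|^{2s+\beta'}$'' for $|y|<1$ is literally correct only when $2s+\beta'<4$ (the fourth difference does not annihilate quartics); since you are free to choose $\beta'$, simply take $\beta'\in(0,\min(\beta,4-2s))$ to stay in that range. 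Finally, note that the middle display in the statement carries $|y|^{N+2s}$ where \eqref{sigmadef} has $|y|^{N+2(s-1)}$; this is a typo in the paper, and your closing sentence already reads it in the intended way.
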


\section{Variational framework}\label{var:sec}

The variational study of the higher-order fractional Laplacian with homogeneous Dirichlet boundary conditions can be framed in suitable fractional Sobolev spaces. To be precise, let $s\in(1,2)$ and recall the definition of the usual fractional Sobolev space 
\begin{align*}
H^s(\R^N):=\left\{u\in L^2(\R^N)\;:\; (1+|\xi|^{2})^{\frac{s}{2}}\ \cF (u)\in L^2(\R^N)\right\}, 
\end{align*}
where $\cF$ denotes the Fourier transform and, for $U\subset \R^N$ open, define the homogeneous Dirichlet fractional Sobolev space
\begin{align}\label{Hs0:def}
\cH^{s}_0(U)&:=\{u\in H^{s}(\R^N)\;:\; u\equiv 0\;\text{on $\R^N\setminus U$}\}
\end{align}
equipped with the norm 
$\|u\|_{\cH^s_0(U)}:=(\|u\|_{L^2(U)}^2+\sum_{i=1}^N\|\partial_i u\|_{L^2(U)}^2+\cE_{s}(u,u))^{\frac{1}{2}}$, 
where $\cE_s$ is a suitable scalar product in $\cH^s_0(U)$. Similarly as in the previous section, we can have three formulas for this scalar product, each one naturally associated to each pointwise evaluation, however, since $\cH^s_0(U)\subset L^2(\R^N)\subset \cL^1_{s-1}\subset \cL^1_s$, these three expressions are \emph{equivalent} for functions in $\cH^s_0(U)$.  Nevertheless, sometimes one expression can be better suited than the other, depending on the object of study.  The three formulas\textemdash via Fourier transform, composition of operators, and finite differences respectively\textemdash are the following: For $u,v\in \cH^s_0(U)$ and $s\in(1,2)$, let
\begin{align}
\cE_{s}(u,v)&=\int_{\R^N} |\xi|^{2s}\cF u(\xi)\cF v(\xi)\ d\xi,\\
&=\frac{e_{N,s}}{2}\int_{\R^N}\int_{\R^N}\frac{(\nabla u(x)-\nabla u(y))\cdot(\nabla v(x)-\nabla v(y))}{|x-y|^{N+2(s-1)}}\ dx\ dy,\\
&=\frac{c_{N,s}}{2}\int_{\R^N}\int_{\R^N}\frac{
(2u(x)-u(x+y)-u(x-y))(2v(x)-v(x+y)-v(x-y))
}{|y|^{N+2s}}\ dxdy,
\end{align}
where the normalization constants $e_{N,s}$ and $c_{N,s}$ are given in \eqref{edef} and \eqref{cNms:def}.  The proof of the equivalence between these expressions can be found in \cite[Theorem 1.8]{AJS17b}.

\medskip

Now, let $f\in L^2(\Omega)$, we say that a function $u\in \cH_0^{s}(\Omega)$ is a \emph{weak solution} of 
\begin{align}\label{wsol:def1}
    (-\Delta)^{s}u&= f\quad\text{ in $\Omega$,}\qquad u=0 \quad\text{ on $\R^N\setminus \Omega$},
\end{align}
if 
\begin{align}\label{wsol:def}
\cE_{s}(u,\varphi)= \int_{\Omega}f(x)\varphi(x)\ dx\qquad \text{for all $\varphi\in \cH^s_0(\Omega)$.}
\end{align}

In this setting we can use Riesz theorem to yield the following existence result.

\begin{thm}[Corollary 3.6 in \cite{AJSmp}]\label{riesz:thm}
Let $U\subset \R^N$ be an open bounded set. Then for any $f\in L^2(U)$ there is a unique weak solution $u\in \cH_0^{s}(U)$ of $(-\Delta)^{s}u=f$ in $U$. 
\end{thm}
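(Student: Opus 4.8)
The plan is to apply the Riesz representation theorem in the Hilbert space $\cH^s_0(U)$ equipped with the inner product naturally associated to the quadratic form $\cE_s$. First I would observe that, since $U$ is bounded, the full norm $\|\cdot\|_{\cH^s_0(U)}$ defined above is equivalent on $\cH^s_0(U)$ to the seminorm $\cE_s(\cdot,\cdot)^{1/2}$; indeed, a fractional Poincaré-type inequality (available here because $s>1$ and functions in $\cH^s_0(U)$ vanish outside the bounded set $U$, so one can pass through the Fourier-side expression $\int |\xi|^{2s}|\cF u|^2\,d\xi$ and compare lower-order frequencies) shows that $\|u\|_{L^2(U)}^2+\sum_i\|\partial_i u\|_{L^2(U)}^2\leq C\,\cE_s(u,u)$. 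Consequently $\langle u,v\rangle:=\cE_s(u,v)$ is a genuine scalar product making $\cH^s_0(U)$ a Hilbert space with an equivalent norm.

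Next I would check that the right-hand side of \eqref{wsol:def} defines a bounded linear functional on this Hilbert space. For fixed $f\in L^2(U)$, the map $\varphi\mapsto \int_U f\varphi\,dx$ is linear, and by Cauchy--Schwarz it is bounded by $\|f\|_{L^2(U)}\|\varphi\|_{L^2(U)}$, which in turn is controlled by $\|f\|_{L^2(U)}\,\cE_s(\varphi,\varphi)^{1/2}$ via the Poincaré inequality from the previous step. Hence the functional is continuous on $(\cH^s_0(U),\cE_s^{1/2})$. The Riesz representation theorem then yields a unique $u\in\cH^s_0(U)$ with $\cE_s(u,\varphi)=\int_U f\varphi\,dx$ for all $\varphi\in\cH^s_0(U)$, which is exactly the definition \eqref{wsol:def} of a weak solution; uniqueness is part of the Riesz statement. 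One should also note that $\cH^s_0(U)$ is complete: it is a closed subspace of $H^s(\R^N)$ (the vanishing condition on $\R^N\setminus U$ is preserved under $H^s$-convergence, at least after passing to a subsequence converging a.e.), so it inherits completeness.

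I expect the main obstacle to be establishing the coercivity/Poincaré inequality rigorously, i.e.\ that $\cE_s$ alone controls the full $\cH^s_0(U)$ norm. The cleanest route is via the Fourier expression: for $u$ supported in the bounded set $U$ one has $\widehat{u}$ analytic, and $\int_{\R^N}|\xi|^{2s}|\cF u|^2\,d\xi$ dominates $\int_{|\xi|\geq 1}(1+|\xi|^2)^{s}|\cF u|^2\,d\xi$ up to a constant, while the low-frequency part is handled by the classical Poincaré inequality on the bounded domain $U$ (using $s>1\geq$ one derivative). Alternatively, since $\cH^s_0(U)\embed \cH^1_0(U)$ continuously and the latter embeds compactly into $L^2(U)$, a standard compactness argument rules out a minimizing sequence with $\cE_s(u_n,u_n)\to 0$ but $\|u_n\|_{\cH^s_0(U)}=1$. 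Everything else — linearity, boundedness of the functional, and the translation of the Riesz conclusion into \eqref{wsol:def} — is routine once this equivalence of norms is in hand.
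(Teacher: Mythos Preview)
Your proposal is correct and follows exactly the approach the paper indicates: the paper does not give a detailed proof but simply states that ``we can use Riesz theorem to yield the following existence result'' and cites \cite{AJSmp}. Your outline---establishing that $\cE_s$ defines an equivalent inner product on $\cH^s_0(U)$ via a Poincar\'e-type inequality, checking boundedness of the linear functional $\varphi\mapsto\int_U f\varphi\,dx$, and invoking the Riesz representation theorem---is precisely the standard argument behind this citation.
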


The scalar product $\cE_s$ also satisfies the following integration-by-parts-type formula (see \cite[Lemma 2.4]{AJS17a} and \cite[Theorem 1.8]{AJS17b}). 

\begin{lemma}\label{ibyp}
Let $U\subset \R^N$ be an open bounded set with Lipschitz boundary, $\alpha\in(0,1)$, $s\in(1,2)$, $u\in C^{2s+\alpha}(U)\cap\cL^1_{s-1}\cap \cH_0^s(U)$. Then
\begin{align*}
\cE_s(u,\varphi)=\int_{\R^N} u\, (-\Delta)^s \varphi \ dx = \int_{\R^N} \varphi\,(-\Delta)^{s} u\ dx \qquad \text{for all $\varphi\in C^\infty_c(U)$.}
\end{align*}
\end{lemma}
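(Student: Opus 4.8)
The plan is to prove Lemma~\ref{ibyp} in two stages, establishing first the bilinearity/symmetry identity on the level of test functions and then extending one argument to the class $C^{2s+\alpha}(U)\cap\cL^1_{s-1}\cap\cH^s_0(U)$. The backbone is the finite-difference representation of $\cE_s$ recorded in Section~\ref{var:sec}, which is already known to be equivalent to the Fourier-side expression by \cite[Theorem 1.8]{AJS17b}.

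First I would take $u,\varphi\in C^\infty_c(\R^N)$ and show $\cE_s(u,\varphi)=\int_{\R^N}\varphi\,(-\Delta)^su\,dx=\int_{\R^N}u\,(-\Delta)^s\varphi\,dx$. The symmetry $\int\varphi\,(-\Delta)^su=\int u\,(-\Delta)^s\varphi$ for smooth compactly supported functions is immediate from the Fourier definition \eqref{Fdef} and Plancherel, since the symbol $|\xi|^{2s}$ is real and even. To connect this with $\cE_s$, I would start from the finite-difference formula for $\cE_s(u,\varphi)$, use the change of variables $y\mapsto -y$ and $x\mapsto x\pm y$ (legitimate by Fubini, since the double integral converges absolutely once one localizes near the diagonal singularity using the $C^2$ bound and splits off the far region using $u,\varphi\in L^1$), and rearrange the product of second-order difference quotients so that all the finite differences fall on $u$ only. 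This reproduces exactly the fourth-order finite-difference kernel appearing in \eqref{Ds}, yielding $\cE_s(u,\varphi)=c_{N,s}\int\!\!\int \varphi(x)\,\frac{u(x+2y)-4u(x+y)+6u(x)-4u(x-y)+u(x-2y)}{|y|^{N+2s}}\,dy\,dx=\int\varphi\,(-\Delta)^su\,dx$, using Lemma~\ref{eq:lem}/\eqref{Ds} for the inner integral. Alternatively, and perhaps more cleanly, I would simply observe that both $\cE_s(u,\varphi)$ and $\int\varphi\,(-\Delta)^su$ equal $\int |\xi|^{2s}\,\widehat u\,\overline{\widehat\varphi}$ by \cite[Theorem 1.8]{AJS17b} and Plancherel, so the identity on $C^\infty_c\times C^\infty_c$ is essentially a citation.

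The substantive step is the extension from $u\in C^\infty_c(\R^N)$ to $u\in C^{2s+\alpha}(U)\cap\cL^1_{s-1}\cap\cH^s_0(U)$, keeping $\varphi\in C^\infty_c(U)$. Fix such a $\varphi$ with $\supp\varphi=:K\subset\subset U$. The quantity $\cE_s(u,\varphi)$ makes sense because $u\in\cH^s_0(U)$, and $(-\Delta)^su(x)$ is defined pointwise for $x\in U$ by \eqref{Ds} since $u\in C^{2s+\alpha}(U)\cap\cL^1_s$ (and $\cL^1_{s-1}\subset\cL^1_s$). I would approximate $u$ in $\cH^s_0(U)$ by $u_n\in C^\infty_c(U)$; then $\cE_s(u_n,\varphi)\to\cE_s(u,\varphi)$ by continuity of the bilinear form. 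For the right-hand side, $\cE_s(u_n,\varphi)=\int_K u_n\,(-\Delta)^s\varphi\,dx$ by the smooth case, and since $u_n\to u$ in $L^2$ and $(-\Delta)^s\varphi\in L^2$ (it is Schwartz-decaying and smooth), $\int_K u_n\,(-\Delta)^s\varphi\to\int_{\R^N}u\,(-\Delta)^s\varphi$; note the integral is really over $\R^N$ but $(-\Delta)^s\varphi$ need not have compact support, so one uses $u\in\cL^1_s$ together with the decay $|(-\Delta)^s\varphi(x)|\lesssim (1+|x|)^{-N-2s}$ to justify convergence there. This gives $\cE_s(u,\varphi)=\int_{\R^N}u\,(-\Delta)^s\varphi\,dx$. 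For the last equality $\int_{\R^N}u\,(-\Delta)^s\varphi\,dx=\int_{\R^N}\varphi\,(-\Delta)^su\,dx$, I would instead approximate $\varphi$: writing the fourth-order difference quotient and applying Fubini on $\R^N\times\R^N$, the absolute convergence is secured near $y=0$ by the $C^{2s+\alpha}(U)$-regularity of $u$ on a neighborhood of $K$ (giving an $O(|y|^{2s+\alpha-N-2s})=O(|y|^{\alpha-N})$ bound there, integrable), and for $|y|$ bounded away from $0$ by $u\in\cL^1_s$ against the integrable tail $|y|^{-N-2s}$; then shifting variables moves the differences onto $\varphi$.

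The main obstacle is the integrability bookkeeping in this last Fubini argument: one must carefully split $\R^N_y=\{|y|<r\}\cup\{|y|\ge r\}$ with $r=\tfrac12\dist(K,\partial U)$, use the local Hölder estimate on $u$ on the $r$-neighborhood of $K$ for the near piece, and for the far piece bound $|u(x\pm y)|$ and $|u(x\pm 2y)|$ by the $\cL^1_s$-weight, exploiting that $x$ ranges over the compact set $K$ so that $1+|x\pm 2y|^{N+2s}\simeq 1+|y|^{N+2s}$. Everything else—the symmetry on smooth functions, the density of $C^\infty_c(U)$ in $\cH^s_0(U)$, and the continuity of $\cE_s$—is standard or already cited. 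I expect the cleanest writeup to lean on the Fourier identification from \cite[Theorem 1.8]{AJS17b} for the smooth case and to spend the bulk of the effort on the two approximation/Fubini arguments just described.
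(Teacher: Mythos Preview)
The paper does not give its own proof of this lemma; it simply attributes the result to \cite[Lemma 2.4]{AJS17a} and \cite[Theorem 1.8]{AJS17b}. Your outline is correct and is essentially the argument those references carry out: the identity on $C^\infty_c\times C^\infty_c$ via Plancherel (or, equivalently, via rearranging the finite-difference kernel), the passage $\cE_s(u,\varphi)=\int_{\R^N} u\,(-\Delta)^s\varphi$ by density of $C^\infty_c(U)$ in $\cH^s_0(U)$ together with continuity of the bilinear form, and then the Fubini/shift argument for $\int_{\R^N}\varphi\,(-\Delta)^su=\int_{\R^N} u\,(-\Delta)^s\varphi$ using the $C^{2s+\alpha}$-regularity of $u$ on a neighborhood of $\supp\varphi$ and the membership $u\in\cL^1_s$.

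One minor technical point worth making explicit in a full writeup: in the Fubini/shift step, the individual terms $\varphi(x)\,u(x+ky)\,|y|^{-N-2s}$ are not separately integrable near $y=0$, so the changes of variables should be performed on the truncated integral over $\{|y|>\epsilon\}$ (where each summand is absolutely integrable) and one then lets $\epsilon\to 0$ by dominated convergence on both sides; the dominating functions are exactly the near/far bounds you already recorded. Also, the phrase ``I would instead approximate $\varphi$'' at the start of that paragraph appears to be a leftover from an earlier draft --- what follows is the direct Fubini argument, not an approximation of $\varphi$.
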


About the regularity of weak solutions, the following is known.

\begin{lemma}[Theorem 2.2 in \cite{G15:3}]
Let $U\subset \R^N$ be a bounded smooth domain and $\beta\in(0,1)$ such that $2s+\beta\not\in\N$. If $f\in C^\beta(\overline{U})$ and $u\in \cH_0^{s}(U)$ is a weak solution of $(-\Delta)^{s}u=f$ in $U$, then $u\in C^s_0(U)\cap C^{2s+\beta}(U)$.
\end{lemma}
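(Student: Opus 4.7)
The plan is to handle the interior claim $u \in C^{2s+\beta}(U)$ and the boundary/global claim $u \in C^s_0(U)$ by essentially different techniques. For interior regularity, I would localize: fix $x_0 \in U$ and $r>0$ with $B_{2r}(x_0) \subset\subset U$, and split $u = w + v$, where $w$ is the Riesz potential representation satisfying $\hl w = f\chi_{B_{2r}(x_0)}$ on all of $\R^N$. Classical H\"older estimates for the Riesz kernel $|x|^{2s-N}$ yield $w \in C^{2s+\beta}_{\mathrm{loc}}(\R^N)$. The remainder $v = u - w$ satisfies $\hl v = 0$ in $B_{2r}(x_0)$, and interior smoothness of $s$-harmonic functions---obtainable from a Poisson-kernel representation or from a Caffarelli--Silvestre-type extension adapted to $s \in (1,2)$---gives $v \in C^\infty(B_{r}(x_0))$. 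Combining the two yields the interior estimate on every compact subset of $U$.

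For the boundary claim $u \in C^s_0(U)$, the point is that the zero extension of $u$ must be $C^{1,s-1}$ across $\partial U$; in particular $\nabla u$ has to vanish on $\partial U$ with an $(s-1)$-H\"older modulus. The natural route has three steps: (a) construct a barrier of size $\dist(\cdot,\partial U)^s$ using the explicit Poisson/Green representation on the ball (developed in Section \ref{balls:sec}), transferred to general smooth $U$ by locally straightening the boundary; (b) deduce the pointwise bound $|u(x)| \le C\,\dist(x,\partial U)^s$ via comparison using the maximum principles of Section \ref{mp:sec}; (c) upgrade this to the full $C^{1,s-1}$ modulus by a Campanato-type iteration in boundary balls. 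More systematically, and as in the cited reference, one interprets $\hl$ as a classical pseudodifferential operator of order $2s$ satisfying the $\mu$-transmission condition with $\mu=s$; the H\"older regularity theory for such operators then produces the stated result, with the assumption $2s+\beta\not\in\N$ ruling out logarithmic corrections at integer-order thresholds.

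The main obstacle is step (c): upgrading the pointwise bound $u = O(\dist(\cdot,\partial U)^s)$ to the sharp H\"older modulus for $\nabla u$ up to $\partial U$. The upper bound $|u|\le C\,\dist(\cdot,\partial U)^s$ is relatively standard via barrier arguments, but establishing $(s-1)$-H\"older continuity of $\nabla u$ up to $\partial U$ requires controlling the commutator of $\hl$ with localization and boundary-straightening diffeomorphisms, which is delicate because $\hl$ is nonlocal. This is precisely what the $\mu$-transmission calculus is designed to handle, and constitutes the main technical content of Grubb's paper.
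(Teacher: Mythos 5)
The paper itself offers no proof of this lemma: it is quoted directly from Grubb \cite{G15:3}, whose argument is exactly the $\mu$-transmission pseudodifferential calculus you invoke in your second, ``more systematic'' route (with $\mu=s$, the hypothesis $2s+\beta\notin\N$ excluding the exceptional integer orders). Insofar as you defer to that theory, you are on the same path as the paper, and your interior decomposition (a Riesz-type potential carrying $f\chi_{B_{2r}(x_0)}$ plus an $s$-harmonic remainder that is smooth inside) is a reasonable sketch of the interior statement, modulo care with the fundamental solution when $2s\geq N$.

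Your self-contained boundary argument (a)--(c), however, has a genuine gap at step (b): you propose to get $|u|\le C\,\dist(\cdot,\partial U)^s$ ``via comparison using the maximum principles of Section \ref{mp:sec}'', but the entire content of that section is that such principles \emph{fail} for $s\in(1,2)$. Theorem \ref{main:thm:point} and Theorem \ref{connected:cor} exhibit sign-changing solutions with nonnegative data in bounded (even connected) domains, and Theorem \ref{even:thm2} shows the Green function itself changes sign already for two disjoint balls; positivity is available only in a single ball (Theorem \ref{green:thm}). Hence a barrier of size $\dist(\cdot,\partial U)^s$ cannot be exploited by comparison in a general smooth domain $U$, and ``transferring'' the ball kernels by locally straightening the boundary does not rescue this, since $\hl$ does not localize under diffeomorphisms in any way that preserves a comparison structure. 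This is not a technicality: the absence of comparison principles is precisely why boundary regularity for higher-order (fractional) Dirichlet problems is obtained through the $\mu$-transmission calculus, or through sharp Green-function estimates specific to the domain, rather than through barriers; your Campanato iteration in step (c) is therefore being fed an estimate that step (b) cannot produce. The honest conclusion is that the elementary route collapses at (b), and the proof of the lemma really is the cited pseudodifferential theorem, which in particular yields the sharp $\delta^s$-type boundary behavior encoded in the conclusion $u\in C^s_0(U)$.
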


\section{Positivity preserving properties}\label{mp:sec}

As mentioned in the introduction, the Laplacian possesses the following well-known general maximum principle. Let $U$ be a bounded domain with smooth boundary in $\R^N$ and let $H^1_0(U)$ denote the usual Sobolev space of weakly differentiable functions with zero trace at $\partial U$.
\begin{lemma}\label{mp:lem}
If $f\in L^2(U)$ is nonnegative in $U$ and $u\in H^1_0(U)$ is a weak solution of
\begin{align*}
(-\Delta)u = f\quad \text{ in }U,\qquad u=0\quad \text{ on }\partial U, 
\end{align*}
that is,
\begin{align}\label{utest}
 \int_U\nabla u \nabla \phi\ dx = \int_U \phi f\ dx\qquad \text{ for all }\phi\in H^1_0(U),
 \end{align}
then $u\geq 0$ a.e. in $U$.
 \end{lemma}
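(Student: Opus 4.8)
The plan is to prove this classical weak maximum principle by testing the weak formulation \eqref{utest} with the negative part of $u$. First I would set $\phi := u^- = \max\{-u, 0\} \in H^1_0(U)$; this is admissible since $u \in H^1_0(U)$ and the negative part of an $H^1_0$ function is again in $H^1_0(U)$ (Stampacchia's lemma on composition with Lipschitz functions vanishing at the origin, which also gives $\nabla u^- = -\nabla u\,\chi_{\{u<0\}}$ a.e.). Plugging this into \eqref{utest} yields
\begin{align*}
\int_U \nabla u \cdot \nabla u^-\, dx = \int_U u^-\, f\, dx.
\end{align*}

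Next I would analyze both sides. On the left, using $\nabla u^- = -\nabla u\,\chi_{\{u<0\}}$, we get $\int_U \nabla u \cdot \nabla u^-\, dx = -\int_{\{u<0\}} |\nabla u|^2\, dx = -\int_U |\nabla u^-|^2\, dx = -\|\nabla u^-\|_{L^2(U)}^2$. On the right, since $f \geq 0$ a.e. in $U$ and $u^- \geq 0$ a.e. in $U$, the integrand $u^- f \geq 0$, hence $\int_U u^- f\, dx \geq 0$. Combining, $-\|\nabla u^-\|_{L^2(U)}^2 \geq 0$, which forces $\|\nabla u^-\|_{L^2(U)} = 0$, i.e. $\nabla u^- = 0$ a.e. in $U$.

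Finally, since $u^- \in H^1_0(U)$ and $U$ is a bounded domain, the Poincaré inequality gives $\|u^-\|_{L^2(U)} \leq C\,\|\nabla u^-\|_{L^2(U)} = 0$, so $u^- = 0$ a.e. in $U$, which is precisely $u \geq 0$ a.e. in $U$. The only point requiring a little care (and the closest thing to an ``obstacle'', though it is entirely standard) is justifying that $u^- \in H^1_0(U)$ with the stated formula for its gradient; this is a well-known truncation property of Sobolev functions and can simply be cited. Everything else is a short direct computation.
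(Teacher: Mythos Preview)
Your proof is correct and follows essentially the same approach as the paper: test the weak formulation with the negative part of $u$ and conclude that it vanishes. The only cosmetic difference is a sign convention (the paper takes $u^-:=\min\{0,u\}$ rather than $\max\{-u,0\}$), and the paper leaves the Poincar\'e step implicit by writing $\|u^-\|_{H^1_0(U)}=0$ directly.
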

\begin{proof}
 Since $u^-:=\min\{0,u\} \in H^1_0(U)$, then \eqref{utest} implies that 
 \begin{align*}
0\leq\|u^-\|^2_{H^1_0(U)}=\int_U|\nabla u^-|^2\ dx=\int_U\nabla u \nabla u^-\ dx = \int_U u^- f\ dx\leq 0,  
 \end{align*}
 that is, $\|u^-\|_{H^1_0(U)}=0$ and therefore $u^- \equiv 0$ a.e. in $U$.
\end{proof}

A very similar proof can be done to show the validity of maximum principles for the fractional Laplacian $(-\Delta)^s$ for $s\in(0,1)$, see \cite{AJSmp}.  Observe that there are two important ingredients in the proof of Lemma \ref{mp:lem}: the variational characterization of the solution \eqref{utest} and the belonging of the negative part $u^-$ to the test space $H^1_0(U)$.  Since the gradient $\nabla u^-$ has a jump discontinuity at the level set $\{x\in U \::\:u(x)=0\}$, we have that $u^-$ is not twice weakly differentiable in general, and therefore $u^-\not\in H^2(U)$, which prevents that a similar proof can be performed for the bilaplacian\footnote{Here $H^2(U)$ denotes the Sobolev space of functions which are twice weakly differentiable in $U$ and we say that $u\in H^2(U)\cap H^1_0(U)$ is a weak solutions of $(\Delta)^2u = f$ in $U$ and $\partial_\nu u=u=0$ on $\partial U$ if $\int_U \Delta u\Delta \phi\ dx = \int_U \phi f\ dx$ for all $\phi\in H^2(U)\cap H^1_0(U)$.}.

Interestingly, in \cite[Th\'{e}or\`{e}me 1]{M89} it is shown that
\begin{align*}
u^-\in H^s(U)\qquad \text{ if }u\in H^s(U)\text{ and }s\in\Big(\,0\,,\,\frac{3}{2}\,\Big),
\end{align*}
and, as explained in the previous section, the problem $(-\Delta)^s u = f$ has a variational structure. Since these are the main ingredients in the proof of maximum principles for $s=1$\textemdash which uses $u^-$ as a test function\textemdash it was conjectured that maximum principles would hold for the higher-order fractional Laplacian if $s\in(0,\frac{3}{2})$. However, our next result reveals that the positivity preserving property fails to hold in general for $s\in(1,2)$, therefore it is \emph{not} the belonging of $u^-$ to the space of test functions the reason why maximum principles hold for $s=1$.

\begin{thm}\label{main:thm:point}[Particular case of Theorem 1.1 in \cite{AJSmp}]
Let $N\in\N$, $s\in(1,2)$, $U\subset\R^N$ be an open bounded smooth domain, let $B$ be an open ball compactly contained in $\R^N\setminus U$, and let $\Omega:= U\cup B$. There are $f\in C^\infty(\overline{\Omega})$ and a sign-changing $u\in C^s(\R^N)\cap C^{\infty}(\Omega)\cap L^\infty(\R^N)\cap \cH^s_0(\Omega)$ such that
\begin{align*}
 (-\Delta)^s u=f>0\quad \text{ in }\Omega,\qquad u=0\quad\text{ on }\R^N\backslash \Omega,\qquad u\lneq 0\quad \text{ in }U,\qquad \text{ and }u> 0\quad \text{ in }B.
\end{align*}
\end{thm}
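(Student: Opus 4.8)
The plan is to construct the counterexample by exploiting the nonlocality of $(-\Delta)^s$ together with the freedom of placing a disjoint ball $B$ far from $U$. The key idea is that although $\Omega = U\cup B$ is disconnected, the operator $(-\Delta)^s$ couples the two components: a function supported (mostly) in $U$ still contributes to $(-\Delta)^s u$ inside $B$ through the hypersingular integral \eqref{Ds}, and conversely. So one should start from a nonnegative, sign-changing \emph{candidate solution} rather than from the data. Concretely, I would first pick any smooth $w\in C^\infty_c(U)$ with $w\lneq 0$ in $U$ (say $w\leq 0$, not identically zero), extended by zero; then $f_0:=(-\Delta)^s w$ is smooth in $U$ but, crucially, $f_0$ is \emph{negative somewhere in $B$} — indeed, for $x\in B$ far from $U$ one computes from \eqref{Ds} that $(-\Delta)^s w(x)\approx c_{N,s}\int 6w(x)\,|y|^{-N-2s}\,dy$-type cancellations vanish and the dominant term is $c_{N,s}\int_{\R^N}\big(w(x+2y)-4w(x+y)-4w(x-y)+w(x-2y)\big)|y|^{-N-2s}dy<0$ because $w\leq 0$ with strict sign on a set of positive measure and $c_{N,s}>0$. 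Thus $w$ solves a Dirichlet problem on $\Omega$ with data that is \emph{not} positive on all of $\Omega$, which is not yet what we want; the point is to correct it.

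The correction step is where the ball $B$ earns its keep. I would add to $w$ a nonnegative bump concentrated in $B$: let $v\in\cH^s_0(\Omega)$ be the weak solution (Theorem \ref{riesz:thm}) of $(-\Delta)^s v = g$ in $\Omega$, $v=0$ on $\R^N\setminus\Omega$, where $g\in C^\infty(\overline\Omega)$ is chosen to be a large positive bump supported in $B$. By the positivity preserving property on the ball recalled in the introduction (for $g\geq 0$, $\equiv 0$ boundary data one has $v\geq 0$ on $B$), and by the nonlocal tail estimate one gets $v>0$ on $B$; moreover $v$ is small and smooth on $U$ if $B$ is far from $U$ and $g$ is suitably normalized, since the influence of $B$ on $U$ decays like $\dist(U,B)^{-N-2s}$. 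Now set $u:=w+tv$ for a suitable constant $t>0$. Then $(-\Delta)^s u = f_0 + t g =: f$, which is smooth on $\overline\Omega$; on $B$ we have $f = f_0|_B + t g$, and by taking $g$ large (equivalently $t$ large, or rescaling $g$) we force $f>0$ on all of $\overline\Omega$, because $f_0$ is a fixed smooth function and $g$ is a fixed positive bump on $B$ while $f_0\geq$ its (finite negative) minimum. Simultaneously, on $U$ we have $u = w + tv$ with $w\lneq 0$ bounded away from $0$ on a large subset and $tv$ as small as we like on $U$ (choosing $\dist(U,B)$ large first, then $t$), so $u\lneq 0$ in $U$; and on $B$, $u = w|_B + tv = tv > 0$ since $w\equiv 0$ on $B$. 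This produces exactly the asserted $u$: sign-changing, in $C^s(\R^N)\cap C^\infty(\Omega)\cap L^\infty(\R^N)\cap\cH^s_0(\Omega)$ (regularity from the regularity lemma for weak solutions applied on each component, plus the fact that $w$ is $C^\infty_c$), with $(-\Delta)^s u = f>0$ in $\Omega$.

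There is a subtlety in the logical order of choosing the parameters, and this is the step I expect to be the main obstacle: the requirement ``$f>0$ on \emph{all} of $\overline\Omega$'' competes with ``$u\lneq 0$ on $U$.'' Enlarging $g$ (or $t$) helps the first but hurts the second through the tail contribution $tv|_U$; enlarging $\dist(U,B)$ helps the second but one must check it does not spoil $f_0>0$ on $U$ (it does not: $f_0=(-\Delta)^s w$ depends only on $w$, hence only on $U$, independent of where $B$ sits) nor the positivity $f_0|_B + tg>0$ on $B$ (the negative part $f_0|_B$ also decays in $\dist(U,B)$, so this only gets easier). The clean way to organize it: (i) fix $w\in C^\infty_c(U)$, $w\leq 0$, $w\not\equiv 0$; (ii) fix a profile $g_0\in C^\infty_c(B)$, $g_0\geq 0$, $g_0\not\equiv0$, with solution $v_0$; (iii) note $\sup_U v_0 \to 0$ and $\sup_{\overline\Omega}|f_0| $ is a fixed constant and $\inf_B g_0>0$ on the bump, so for $t$ large enough $f_0 + t g_0>0$ on $\overline\Omega$ \emph{provided} $t\sup_U v_0 < \tfrac12\inf_{K}|w|$ on a suitable compact $K\subset U$ where $w$ is bounded away from $0$; this last inequality is arranged by first translating $B$ far enough so that $\sup_U v_0$ is small relative to $1/t$. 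One must be slightly careful that moving $B$ changes $v_0$, so the correct statement is: choose $t=t(w,g_0)$ from the positivity-on-$\overline\Omega$ requirement first (this depends only on the shapes, not on $\dist(U,B)$, once we note $f_0$ is independent of $B$ and the negative part of $f_0|_B$ only shrinks), then choose $\dist(U,B)$ large so that $t\sup_U v_0<\tfrac12\min_K|w|$. The rest is routine: assemble $u=w+tv_0$, invoke Lemma on regularity of weak solutions for the $C^{2s+\beta}$ and $C^s_0$ regularity on each component, and verify the listed properties.
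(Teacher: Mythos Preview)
There is a genuine gap. Your data on $U$ is $f=f_0+tg_0$ with $g_0\in C^\infty_c(B)$, so $g_0\equiv 0$ on $U$ and hence $f\equiv f_0=(-\Delta)^s w$ there, independently of $t$. You never verify that $(-\Delta)^s w>0$ on $U$, and for a generic nonpositive bump $w\in C^\infty_c(U)$ this is simply false: at an interior minimum of $w$ the finite-difference formula \eqref{Ds} typically gives a negative value (just as $-\Delta w<0$ at a strict interior minimum of a smooth nonpositive bump). Worse, when $U$ is itself a ball\textemdash which the theorem must cover\textemdash no such $w$ exists at all: a function $w\in C^\infty_c(U)\subset\cH^s_0(U)$ with $w\lneq 0$ and $(-\Delta)^s w>0$ in $U$ would directly contradict the positivity of Boggio's Green function \eqref{green} (Theorem~\ref{green:thm}). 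So the step ``for $t$ large enough $f_0+tg_0>0$ on $\overline\Omega$'' cannot be carried out, and no choice of $t$ or of $\dist(U,B)$ repairs this. (A secondary issue: your appeal to ``the positivity preserving property on the ball'' to get $v>0$ in $B$ is not justified either, since $v$ solves the Dirichlet problem in $\Omega$, not in $B$; the values of $v$ on $U$ feed back into $B$ through the nonlocal operator.)

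The paper's construction fixes exactly this point by reversing the roles. One takes $g$ to be (a large multiple of) the explicit torsion function of the ball $B$, namely $g=c\,(r_B^2-|x-x_B|^2)_+^{\,s}$, for which $(-\Delta)^s g$ equals a positive constant in $B$ \emph{and} is strictly positive on all of $\R^N\setminus\overline B$: for $x\notin\overline B$ one has $g\equiv 0$ near $x$, so the finite-difference formula collapses to
\[
(-\Delta)^s g(x)=2c_{N,s}\bigl(2^{2s}-4\bigr)\int_{\R^N}\frac{g(x+y)}{|y|^{N+2s}}\,dy>0,
\]
since $c_{N,s}>0$, $2^{2s}-4>0$ for $s\in(1,2)$, and $g\ge 0$. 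This global positivity of $(-\Delta)^s g$ is precisely the ingredient your $w$ lacks. One then sets $u=\psi+g$ with $\psi\in C^\infty_c(U)$, $\psi\le 0$, $\psi\not\equiv 0$ an \emph{arbitrary} fixed negative bump: since $(-\Delta)^s\psi$ is bounded and $(-\Delta)^s g$ is bounded below by a positive constant on the compact set $\overline\Omega$, scaling $g$ up forces $f=(-\Delta)^s u>0$ on $\overline\Omega$, while $u=\psi\lneq 0$ on $U$ and $u=g>0$ on $B$ are automatic. In short, the positivity on $U$ must come from the ball component $g$, not from the bump $w$ in $U$.
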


\begin{figure}[h!]
\begin{center}
\includegraphics[height=3cm]{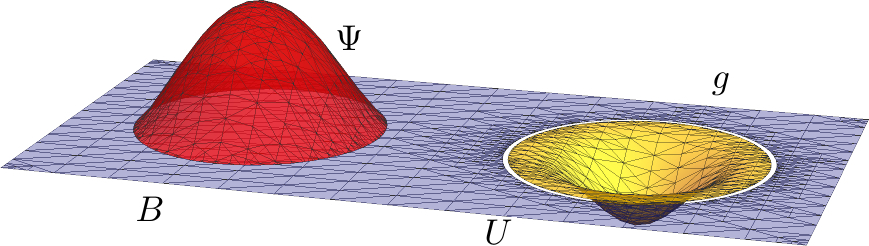}
\end{center}
\caption{Theorem \ref{main:thm:point} is shown via an explicit counterexample $u=\psi+g$ with the above shape.}
\end{figure}

In fact, one can show that the Green function $G_s^\Omega$ associated to two disjoint balls is positive if $s\in(0,1)$ but sign changing if $s\in(1,2)$. 

\begin{thm}[Particular case of Theorem 1.10 in \cite{AJS17a}]\label{even:thm2}
Let $N\in \mathbb N$, $e_1=(1,0,\ldots,0)\in \R^N$, $B=B_1(0)$, $V=B_1(3e_1)$ and $\Omega=B\cup V$. Then 
\begin{align*}
G_s^\Omega&>0\qquad \text{ in }\quad \{(x,y)\in(B\times B)\cup (V\times V)\::\: x \neq y\},\\
G_s^\Omega&>0\qquad \text{ in }\quad (B\times V)\cup (V\times B),\qquad \text{ if }\quad s\in(0,1),\\
G_s^\Omega&<0\qquad \text{ in }\quad (B\times V)\cup (V\times B),\qquad \text{ if }\quad s\in(1,2).
\end{align*}
\end{thm}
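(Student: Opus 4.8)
The plan is to reduce everything to an explicit computation on the ball using the known Green function for the fractional Laplacian of order $s$ on a single ball, together with a careful analysis of the interaction term that appears when one passes to the disconnected domain $\Omega=B\cup V$. First I would recall the explicit formula for the Green function $G_s^B$ on the unit ball $B$ (of Boggio type, as in the references \cite{AJS16b,AJS17a}), and the analogous formula on the translated ball $V=B_1(3e_1)$ obtained by translation invariance. The key structural fact is that for a disconnected domain $\Omega$ the Green function is \emph{not} merely the direct sum of the Green functions on the two components: because $(-\Delta)^s$ is nonlocal, a source placed in $B$ produces a nontrivial response in $V$ and vice versa. Concretely, $G_s^\Omega(x,y)$ for $x\in B$, $y\in V$ is governed by a correction/interaction term, and the sign of this term is exactly what distinguishes $s\in(0,1)$ from $s\in(1,2)$.

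Next I would set up the representation $u(x)=\int_\Omega G_s^\Omega(x,y)f(y)\,dy$ guaranteed by Theorem \ref{riesz:thm} and isolate, for $x\in B$ fixed and a source $y\in V$, the off-diagonal block of $G_s^\Omega$. The strategy for computing it is the standard "solve on one component with nonzero exterior data" trick: write $u = u_0 + w$, where $u_0$ solves the problem on $V$ alone (extended by zero) and $w$ is the $s$-harmonic correction in $B$ with exterior datum equal to $-u_0$ on $\R^N\setminus B$ (in the higher-order sense, with the appropriate Dirichlet-type conditions). Then the off-diagonal part of $G_s^\Omega(x,y)$, $x\in B$, is expressed through the Poisson-type kernel of $B$ applied to the restriction to $\R^N\setminus B$ of $G_s^V(\cdot,y)$. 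The decisive point is that, by the Boggio formula, $G_s^V(z,y)$ for $z\notin V$ changes sign precisely according to the factor $(|z-3e_1|^2-1)$ raised to a power related to $s$: for $s\in(0,1)$ the relevant exponent keeps the extension of the Green function of one sign outside its ball, whereas for $s\in(1,2)$ this exponent lies in $(0,1)$ in a way that forces sign changes — this is the same mechanism, in disguise, as the failure of the maximum principle recorded in Theorem \ref{main:thm:point}. Then one checks that the Poisson kernel of $B$ is positive (this is classical for all $s>0$ on the ball, cf. \cite{AJS16b,AJS17a}), so the sign of the resulting convolution is controlled by the sign of the exterior values of $G_s^V(\cdot,y)$, yielding $G_s^\Omega>0$ on $(B\times V)\cup(V\times B)$ when $s\in(0,1)$ and $G_s^\Omega<0$ there when $s\in(1,2)$. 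The diagonal-block statements $G_s^\Omega>0$ on $(B\times B)\cup(V\times V)$ (off the diagonal) then follow because $G_s^\Omega(x,y)$ for $x,y\in B$ equals $G_s^B(x,y)$ plus a correction term whose sign, by the same Poisson-kernel argument, is that of the exterior values of $G_s^B$, and one verifies that the total remains positive; for $s\in(0,1)$ this is immediate from positivity of everything in sight, while for $s\in(1,2)$ it requires the quantitative estimate that the (possibly negative) correction is dominated by the (strongly positive, singular near the diagonal) leading term $G_s^B$.

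The main obstacle I expect is precisely this last quantitative comparison on the diagonal blocks for $s\in(1,2)$: one must show that the sign-changing interaction correction never overwhelms the principal Boggio singularity. The cleanest route is to use the explicit Boggio-type kernel on the ball together with sharp two-sided bounds $G_s^B(x,y)\asymp |x-y|^{2s-N}\min\{1,\delta(x)^s\delta(y)^s|x-y|^{-2s}\}$ (for $N>2s$; with the obvious logarithmic/constant modifications when $2s\ge N$), and comparable bounds for the Poisson kernel; then the correction term is estimated by a convolution of these bounds over the complement of the ball, which is finite and of lower order than $G_s^B$ near the diagonal and comparable away from it, but with an explicitly controllable constant because the two balls are at fixed positive distance (centers at $0$ and $3e_1$, radii $1$, so $\dist(B,V)=1$). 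A secondary technical point is to make rigorous the "solve with exterior data" decomposition in the higher-order fractional setting — i.e., to justify that the correction $w$ is genuinely given by a positive Poisson kernel acting on the exterior trace — but this is exactly the machinery developed in \cite{AJS16b,AJS17a} for the ball, which I would invoke rather than reprove. Finally, I would double-check the borderline behavior as $x\to\partial B$ or $y\to\partial V$ to confirm the sign is strict in the open product sets claimed.
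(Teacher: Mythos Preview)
The paper does not give its own proof of this theorem; it is quoted from \cite{AJS17a}. So the relevant comparison is with the mechanism highlighted elsewhere in the survey, namely the sign of the nonlocal Poisson kernel $\Gamma_s$ in \eqref{PK}.

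Your overall architecture (represent the off-diagonal block of $G_s^\Omega$ via the $s$-harmonic extension into one ball of the values on the other, then compare quantitatively on the diagonal blocks) is the right shape. However, the heart of the argument is misattributed, and in a way that would make the proof collapse as written. You assert that ``the Poisson kernel of $B$ is positive (this is classical for all $s>0$ on the ball)'' and that the sign flip between $s\in(0,1)$ and $s\in(1,2)$ comes from ``$G_s^V(z,y)$ for $z\notin V$'' changing sign through the Boggio formula. Both claims are wrong. By \eqref{PK} the nonlocal Poisson kernel carries the factor $(-1)^m$, so $\Gamma_s>0$ for $s\in(0,1)$ and $\Gamma_s<0$ for $s\in(1,2)$; this is exactly the ``higher-order phenomenon'' flagged under \eqref{u:prob} and in Remark~\ref{sb2}. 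And the Boggio Green function \eqref{green} is identically zero outside its ball because $\delta(z)=(1-|z|^2)_+$ vanishes there, so there are no ``exterior values of $G_s^V$'' to carry a sign. The actual mechanism is the reverse of what you wrote: for $x\in B$, $y\in V$ one has
\[
G_s^\Omega(x,y)=\int_{V}\Gamma_s(x,z)\,G_s^\Omega(z,y)\,dz,
\]
and the sign of the off-diagonal block comes from $\Gamma_s$, while the integrand $G_s^\Omega(z,y)$ with $z,y\in V$ is the diagonal block you are trying to show is positive.

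With this correction your outline becomes viable: once you know the diagonal block is positive, the off-diagonal sign is immediate from the sign of $\Gamma_s$. The genuine work then lies exactly where you placed it, namely the diagonal block for $s\in(1,2)$. There the decomposition $G_s^\Omega(x,y)=G_s^V(x,y)+\text{correction}$ for $x,y\in V$ has a correction given by a $\Gamma_s$-integral over $B$ of the (off-diagonal) values, which is nonnegative because $\Gamma_s<0$ and the off-diagonal block is negative; so in fact the correction \emph{helps} rather than hurts, and no delicate quantitative domination is needed. If you try to run it the way you originally wrote (positive Poisson kernel, sign-changing exterior data), you would be fighting a nonexistent sign and the argument would not close.
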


\begin{figure}[h!]
\begin{center}
\includegraphics[height=3.8cm]{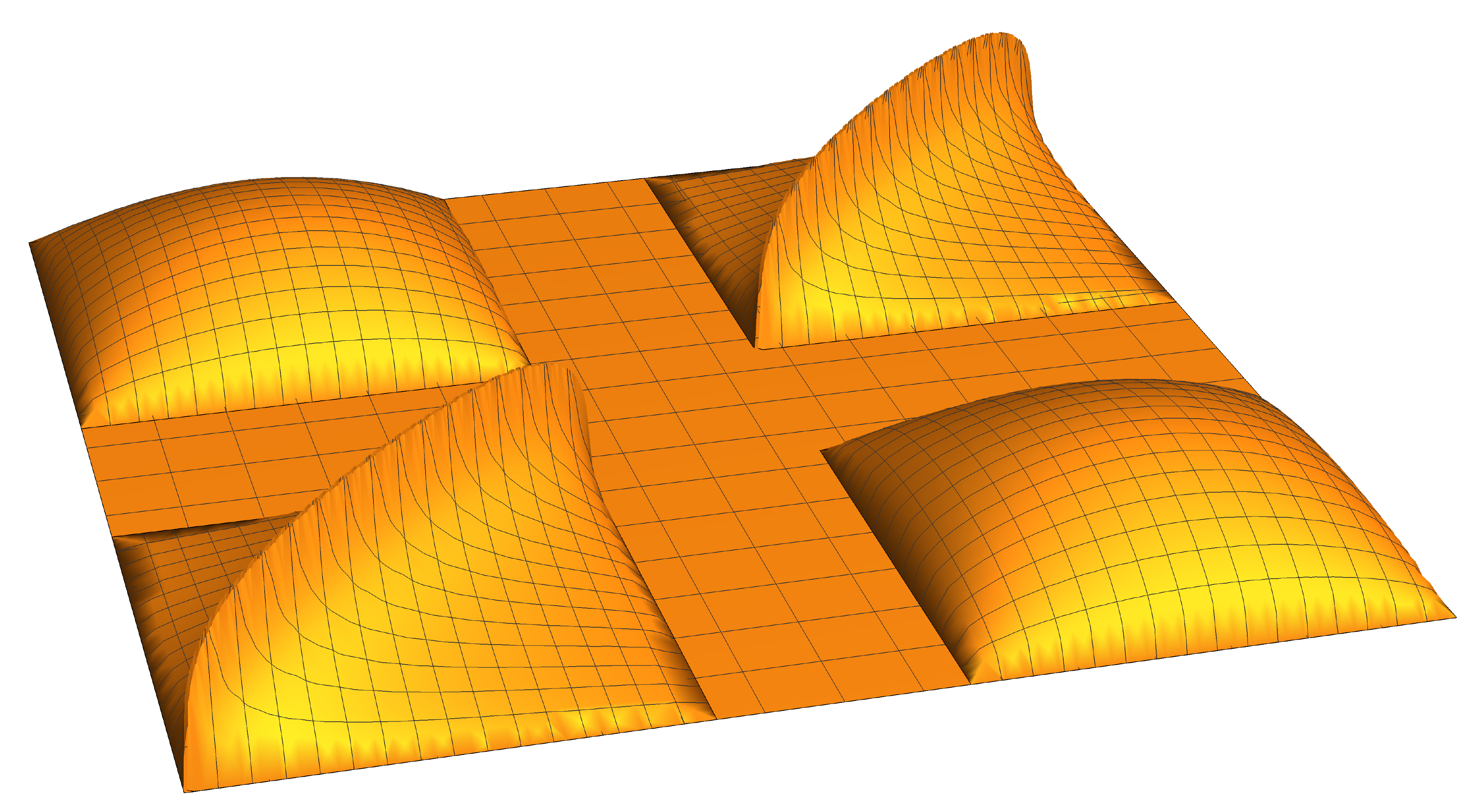}\ \includegraphics[height=3.4cm]{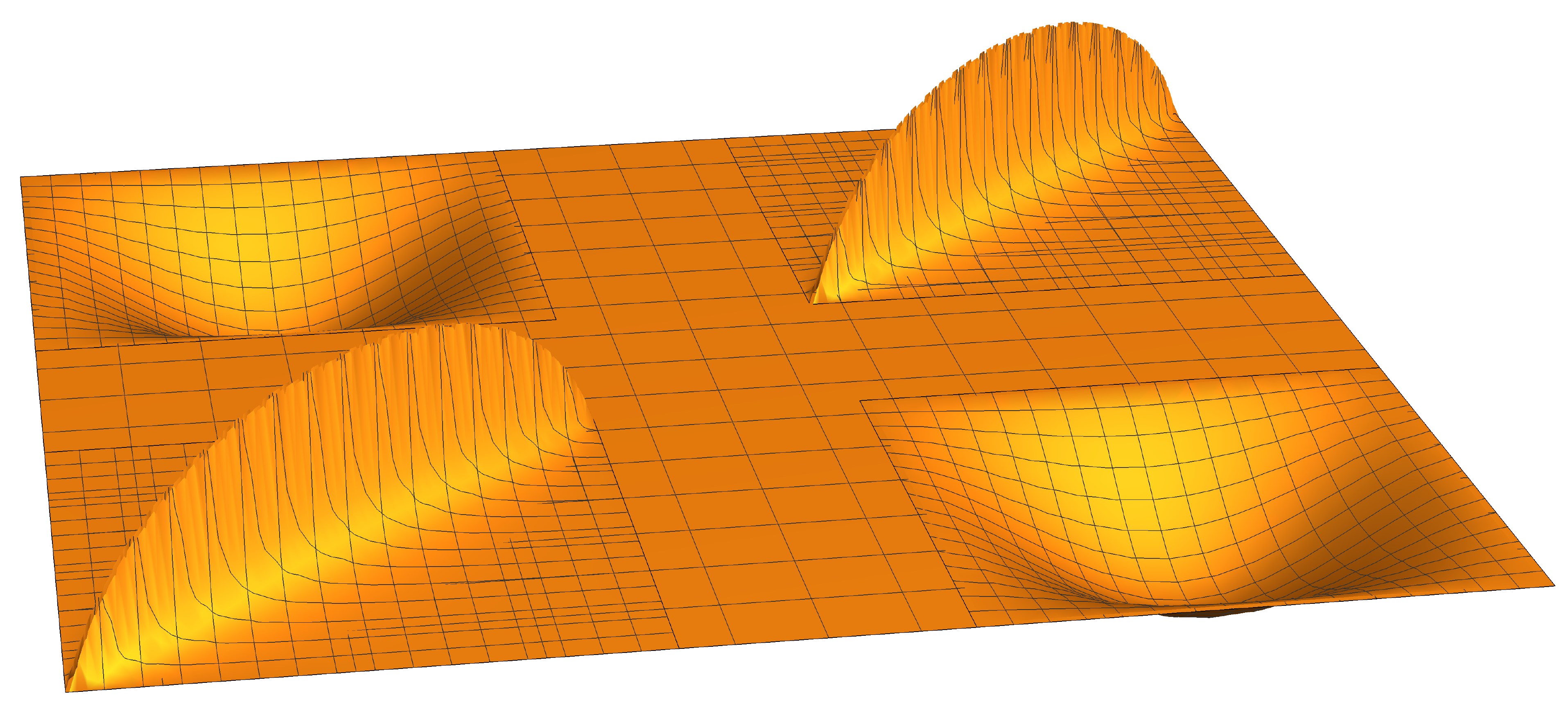}
\end{center}
\caption{The Green function $G_s^\Omega$ for $(-\Delta)^s$ in $\Omega=(-1,1)\cup(2,4)$, on the left for $s\in(0,1)$ and on the right for $s\in(1,2)$.}
\end{figure}

The domain considered in Theorem \ref{main:thm:point} is disconnected, however one can use a perturbation argument to join the domains with a thin tube and find a sign-changing solution in a \emph{connected domain}\footnote{A slight variation of this argument can be used to find sign-changing solutions also in connected \emph{smooth} domains}.

\begin{thm}[Theorem 1.11 in \cite{AJS17a}]\label{connected:cor}
	Let $N\geq 2$, $s\in(1,2)$, $\Omega=B_1(0)\cup B_1(3e_1)$, $L:=\{t e_1\::\: 0<t<3\}$, and 
	\begin{align}\label{omegabn}
	\Omega_n=\Omega \cup \{\ x\in\R^N\::\: \operatorname{dist}(x\ ,\ L)<\frac{1}{n}\ \}\qquad \text{ for } n\in\N.
	\end{align}
	There is $n\in\N$, a nonnegative function $f_n\in L^\infty(\Omega_n)$, and a weak solution $u_n\in\cH_0^{s}(\Omega_n)$ of $(-\Delta)^{s}u_n=f_n\geq 0$ in $\Omega_n$, $u_n=0$ on $\R^N\backslash\overline{\Omega_n}$,
	such that $\operatorname{essinf}_{\Omega_n} u_n<0$ and $\operatorname{esssup}_{\Omega_n} u_n>0$.
\end{thm}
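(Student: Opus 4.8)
The plan is to derive Theorem \ref{connected:cor} from Theorem \ref{even:thm2} via a perturbation argument in which the thin tube connecting the two balls is treated as a small change of domain. First I would fix, once and for all, a point $x_0\in V=B_1(3e_1)$ and a point $y_0\in B=B_1(0)$ together with small radii so that $\overline{B_{r}(x_0)}\subset V$ and $\overline{B_{r}(y_0)}\subset B$, and consider the solution $w$ on the disconnected domain $\Omega$ to $(-\Delta)^s w = f$ with $f$ a fixed nonnegative bump supported in $B_r(y_0)$. By Theorem \ref{even:thm2}, $w$ is given by integrating the Green function $G_s^\Omega$ against $f$; since $G_s^\Omega<0$ on $V\times B$ and $G_s^\Omega>0$ on $B\times B$, we get $w<0$ on a set of positive measure in $V$ and $w>0$ on a set of positive measure in $B$. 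In particular there are compact sets $K_+\subset B$ and $K_-\subset V$ and a constant $c_0>0$ with $w\geq 2c_0$ on $K_+$ and $w\leq -2c_0$ on $K_-$.

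Next I would let $u_n\in\cH_0^s(\Omega_n)$ be the weak solution of $(-\Delta)^s u_n = f$ in $\Omega_n$ with $u_n=0$ on $\R^N\setminus\overline{\Omega_n}$, which exists and is unique by Theorem \ref{riesz:thm} (note $f\in L^\infty(\Omega_n)$ since $\supp f$ is disjoint from the tube for all large $n$, so the same $f$ works). Extending $u_n$ and $w$ by zero, both lie in $\cH_0^s(\Omega_n)\subset \cH_0^s(\R^N)$ for $n$ large (since $\Omega\subset\Omega_n$), and the difference $z_n:=u_n-w$ satisfies $\cE_s(z_n,\varphi)=0$ for all $\varphi\in\cH_0^s(\Omega)$, because $u_n$ and $w$ solve the same equation against all such test functions. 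Testing instead with $z_n$ itself, which is admissible in $\cH_0^s(\Omega_n)$, and using that $z_n - (\text{its }\cH_0^s(\Omega)\text{-part})$ is controlled by the tube, one obtains an energy estimate of the form $\|z_n\|_{\cH_0^s(\R^N)}^2\leq \cE_s(w,\psi_n)$ where $\psi_n$ is supported essentially on $\Omega_n\setminus\Omega$, a set whose measure tends to $0$ as $n\to\infty$. Combining this with the continuity properties of $\cE_s$ and the boundedness of $\|w\|_{\cH_0^s}$, I would conclude $\|u_n - w\|_{\cH_0^s(\R^N)}\to 0$, hence (after passing to a subsequence) $u_n\to w$ a.e. in $\R^N$. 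Then $\essinf_{\Omega_n}u_n \leq \essinf_{K_-}u_n \to \essinf_{K_-} w\leq -2c_0<0$ and similarly $\esssup_{\Omega_n}u_n\geq c_0>0$ for all $n$ large, which is the desired conclusion. Relabelling, we rename $f_n:=f$ and keep $u_n$.

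The main obstacle I anticipate is making the energy/domain-perturbation estimate rigorous: the nonlocal bilinear form $\cE_s$ does not localize cleanly, so one cannot simply say "the extra mass lives on a small set." One must carefully split $z_n$ relative to the two domains, control the $\cH_0^s$-seminorm of a function supported on the shrinking tube $\Omega_n\setminus\Omega$ (a tube of radius $1/n$ around the segment $L$), and show this seminorm tends to zero — this requires a capacity-type estimate showing that the $\cH^s$-capacity of a thin tube in $\R^N$ ($N\geq 2$) vanishes as the radius shrinks, which is where the hypothesis $N\geq 2$ is essential (a thin tube around a curve has vanishing $s$-capacity for $s<N/\ldots$ in dimension $\geq 2$, but a "tube" around a point-segment in $\R^1$ would be an interval of fixed length). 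A clean way to package this is to exhibit an explicit cutoff function $\eta_n$ equal to $1$ on $L$, supported in $\Omega_n\setminus\Omega$ near the tube, with $\|\eta_n\|_{\cH_0^s}\to 0$, and use it to build the comparison; alternatively one invokes the abstract stability of solutions under Mosco convergence of the spaces $\cH_0^s(\Omega_n)\to\cH_0^s(\Omega)$, which holds precisely because the symmetric difference is capacitarily negligible. Either route reduces the theorem to this single quantitative capacity fact plus the already-established sign information from Theorem \ref{even:thm2}.
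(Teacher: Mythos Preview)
The survey does not include a proof of this theorem; it merely cites \cite{AJS17a} and describes the idea as ``a perturbation argument to join the domains with a thin tube''. Your overall strategy --- fix a sign-changing solution $w$ on the disconnected set $\Omega$ coming from Theorem~\ref{even:thm2} (or directly from Theorem~\ref{main:thm:point}), solve the same equation on $\Omega_n$ to get $u_n$, and show $u_n\to w$ --- matches this description and is correct in outline.

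One point deserves correction, though. Of the two routes you propose for the convergence step, the cutoff route (build $\eta_n\equiv 1$ on $L$, supported near the tube, with $\|\eta_n\|_{\cH^s_0}\to 0$) does \emph{not} work in the full range of the theorem: it requires the segment $L$ to have zero $H^s$-capacity, but for $2s>N$ --- in particular for $N=2$ and any $s\in(1,2)$ --- one has $H^s(\R^N)\hookrightarrow C^0$, so even single points carry positive $H^s$-capacity and no such cutoff exists. The Mosco/weak-limit route you mention as an alternative is the one to use, and it needs only that $|L|=0$ (true precisely for $N\geq 2$), not a capacity estimate. Concretely: the energy comparison $J(u_n)\leq J(w)$ (with $w\in\cH_0^s(\Omega)\subset\cH_0^s(\Omega_n)$ admissible) gives a uniform $H^s$-bound; any weak limit $u_*$ of $(u_n)$ lies in $\bigcap_m\cH_0^s(\Omega_m)=\cH_0^s(\Omega)$ because $|\overline L|=0$; testing with $\varphi\in\cH_0^s(\Omega)$ identifies $u_*=w$; and $J(u_n)\to J(w)$ upgrades weak to strong $H^s$-convergence. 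Compact embedding then yields $u_n\to w$ on $K_\pm$ and the sign-changing conclusion follows. So keep your plan but commit to the weak-limit argument; the hypothesis $N\geq 2$ enters only through $|L|=0$, not through capacity. (Your intermediate inequality $\|z_n\|^2\leq\cE_s(w,\psi_n)$ is not correct as written; the argument above avoids having to repair it.)
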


\begin{figure}[h!]
\begin{center}
\includegraphics[height=3cm]{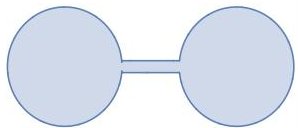}
\end{center}
\caption{Example of the domain $\Omega_n$ given in \eqref{omegabn} for $N=2$.}
\end{figure}

Although maximum principles do not hold in general domains in the higher-order fractional case, we shall see in the next section that positivity preserving properties \emph{do} hold whenever the domain is a ball.

\section{Explicit formulas for the inhomogeneous Dirichlet problem in balls}\label{balls:sec}

In the reminder of the paper we focus our attention on the case of a unitary ball 
\begin{align*}
B:=\{x\in\R^N:|x|<1\}\qquad \text{ in }\R^N,\ N\in \N.  
\end{align*}

The purpose of this section is to explore the notion of the inhomogeneous Dirichlet problem for the higher-order fractional Laplacian, namely, we study what is the natural extension of Dirichlet boundary conditions in order to have well-posed problems for $(-\Delta)^s$ and such that one recovers \eqref{lp} and \eqref{bilp} in the limit as $s\to 1$ or $s\to 2$.

\medskip

A first remark is that the operator $(-\Delta)^s$ is \emph{nonlocal}, since the pointwise computation of $(-\Delta)^s u(x)$ involves all the values of $u$ in $\R^N$, see \eqref{Ds}. Therefore, a natural \textquotedblleft boundary condition\textquotedblright\ is to prescribe values in the complement of the domain, that is,
\begin{align*}
 u=v\qquad \text{ on }\mathbb R^N\backslash\overline{B},
\end{align*}
for some suitable $v\in\cL^1_s$.  We shall see below that the behavior of $v$ close to the boundary $\partial B$ is particularly important when trying to find an explicit representation formula for the solution. 

\medskip

Furthermore, one can also prescribe data at the boundary $\partial B$ using appropriate \emph{traces}. To be precise, for $\beta\in\R$ let
\begin{equation*}
\delta(x)^{\beta}:=\left\{\begin{aligned}
&(1-|x|^2)^{\beta},&& \quad \text{if $1-|x|^2>0$,}\\
&0,&&\quad  \text{if $1-|x|^2\leq 0$.}
\end{aligned}\right.
\end{equation*}
Then, for suitable $u$ and for all $z\in \partial B$, let
\begin{equation}\label{t:def}
\begin{aligned}
 D^{s-2}u(z)&:=\lim_{\substack {x\to z\\ x\in B}}\delta(x)^{2-s}\Big(u(x)
 -\lim_{\substack {y\to z\\ y\in \R^N\backslash B}}u(y)\Big),\\
 D^{s-1}u(z)&:=-\lim_{\substack {x\to z\\ x\in B}}\frac{\partial}{\partial|x|} \Big[\delta(x)^{2-s}\Big(u(x)
 -\lim_{\substack {y\to z\\ y\in \R^N\backslash B}}u(y)\Big)\Big],
\end{aligned}
\end{equation}
where all the limits are always meant in the normal direction, that is, with $\frac{x}{|x|}=\frac{y}{|y|}=z$.  At first glance, the traces \eqref{t:def} might look strange and cumbersome, but let us analyze closely these definitions.  A first remark is that, if $u=0$ in $\R^N\backslash B$, then the traces reduce to 
\begin{align}\label{red}
 D^{s-2}u(z)=\lim_{\substack {x\to z\\ x\in B}}\delta(x)^{2-s}u(x),\qquad 
 D^{s-1}u(z)=-\lim_{\substack {x\to z\\ x\in B}}\frac{\partial}{\partial|x|}[\delta(x)^{2-s} u(x)].
\end{align}
For $s=2$ the weight $\delta^{2-s}$ disappears and \eqref{red} are exactly the Dirichlet boundary conditions for the bilaplacian ($D^0u=u$ and $D^1u=-\partial_\nu u$ on $\partial B$).  For $s\in (1,2)$ note that $2-s\in(0,1)$ and therefore, if $u\in C(B)$ and $D^{s-2}u(z)=\lim_{x\to z}\delta(x)^{2-s}u(x)\neq 0$ for some $z\in\partial B$, then $u$ must be singular at $z$ (see Figure \ref{5f} below).  Solutions satisfying these kind of boundary conditions are sometimes called \emph{very large solutions} and they have been studied in \cite{nicola} for the case $s\in(0,1)$ using a similar trace operator. We also mention that trace operators combining weights and derivatives as in \eqref{red} were also used in \cite[Theorem 6.1]{G15:2} (see also \cite{G15:3}), to study solvability of pseudodifferential operators in a more general setting. 

\begin{remark}\label{rem}
The traces in \eqref{t:def} are different from the ones used in \cite{AJS17a}, which are given by
\begin{equation}\label{other}
\begin{aligned}
 \widetilde D^{s-2}u(z)&=\lim_{\substack {x\to z\\ x\in B}}[\delta(x)^{2-s} u(x)],\\
 \widetilde D^{s-1}u(z)&=-\lim_{\substack {x\to z\\ x\in B}}\frac{\partial}{\partial|x|^2}[\delta(x)^{2-s} u(x)].
\end{aligned}
\end{equation}
The two main differences between \eqref{t:def} and \eqref{other} are the use of the differential operator $\frac{\partial}{\partial |x|}$ instead of $\frac{\partial}{\partial |x|^2}$ and the limit $\lim\limits_{\substack {y\to z\\ y\in \R^N\backslash B}}u(y)$, which does not appear in \eqref{other}.  The reason for the first change is that the operator $\frac{\partial}{\partial |x|}$ is simply the normal derivative $\partial_\nu$ at $\partial B$, which is more common in the study of boundary value problems and substituting $\frac{\partial}{\partial |x|}$ instead of $\frac{\partial}{\partial |x|^2}$ does not imply many changes for $s\in(1,2)$, in fact, 
\begin{align*}
\frac{\partial}{\partial |x|^2}f(x)=\frac{1}{2}\frac{\partial}{\partial |x|}f(x)\qquad \text{ at }\partial B. 
\end{align*}
 The second change, that is, the limit from outside the ball $\R^N\backslash B$, is necessary to consider more general data in $\R^N\backslash B$ which may not vanish close to $\partial B$, see Theorem~\ref{xtra:thm} below.
\end{remark}

In the next subsections we show how to construct solutions to the fractional higher-order inhomogeneous Dirichlet boundary value problem via explicit kernels. 

\subsection{The Green function}

For $x,y\in \R^N,$ $x\neq y$ let $\rho(x,y)=\delta(x)\delta(y)|x-y|^{-2}$ and define
\begin{equation}\label{green}
G_s(x,y)\ =\ k_{N,s} {|x-y|}^{2s-N}\int_0^{\rho(x,y)}\frac{t^{s-1}}{(t+1)^{\frac{N}{2}}}\ dt,
\qquad s>0,\ \ \ x,y\in \R^N,\ \ \ x\neq y,
\end{equation}
where $k_{N,s}$ is a positive normalization constant given in \eqref{c}. The kernel \eqref{green} is known as Boggio's formula (see \cite{B1905,BGR61,GGS10,DG2016,AJS16b}) or Green function for $(-\Delta)^s$ in $B$.  Using this kernel we state the following existence and uniqueness result. Observe also that, since $G_s$ is a positive kernel, a maximum principle is automatically satisfied.

\begin{thm}[Theorem 1.1 in \cite{AJS16b} and Theorem 1.4 in \cite{AJS17a}]\label{green:thm}
Let $s\in(1,2)$, $N\in \N$, $f\in C^{\alpha}(\overline{B})$ for some $\alpha\in(0,1)$ such that $2s+\alpha\not\in\N$, and
\begin{align}\label{Gsu}
u:\R^N\to\R\quad \text{ be given by}\quad u(x)&:=\ \int_{B} G_{s}(x,y)\,f(y)\ dy,
\end{align}
then $u\in C^{2s+\alpha}(B)\cap C_0^s(B)$ is the unique pointwise solution (in $\cH^s_0(B)$) of
\begin{align}\label{mp}
 (-\Delta)^s u = f\quad \text{ in }B,\qquad  u\equiv 0\quad \text{ on }\mathbb R^N\backslash B,
 \qquad D^{s-2}u=D^{s-1}u=0\quad \text{ on }\partial B,
\end{align}
and there is $C>0$ such that $\|\operatorname{dist}(\cdot,\partial B)^{-s}u\|_{L^{\infty}(B)}<C\|f\|_{L^{\infty}(B)}. $ Furthermore, since $G_s$ is a positive kernel, if $f\geq 0$ in $U$ and $f\not\equiv 0$, then $u>0$ in $U$.
\end{thm}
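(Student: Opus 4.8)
The plan is to build the solution explicitly from Boggio's kernel \eqref{green} and verify each claimed property in turn. First I would establish the mapping properties of the Green operator $f\mapsto u:=\int_B G_s(\cdot,y)f(y)\,dy$: using the pointwise bounds on $G_s$ (in particular $G_s(x,y)\lesssim |x-y|^{2s-N}\min\{1,\rho(x,y)^s\}$ and the comparison $G_s(x,y)\asymp \delta(x)^s\delta(y)^s$-type estimates away from the diagonal together with the $|x-y|^{2s-N}$ behavior on the diagonal) one shows that $u$ is well-defined for $f\in C^\alpha(\overline B)\subset L^\infty(B)$, that $u\equiv 0$ on $\R^N\setminus B$ by construction (since $G_s(x,y)=0$ whenever $x\notin B$, as $\rho(x,y)\le 0$ there), and that $\|\operatorname{dist}(\cdot,\partial B)^{-s}u\|_{L^\infty(B)}\le C\|f\|_{L^\infty(B)}$ by integrating the kernel bound against the constant $\|f\|_\infty$. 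This same kernel estimate gives $u\in C^s_0(B)$. For the interior regularity $u\in C^{2s+\alpha}(B)$ I would invoke the regularity lemma already quoted in the excerpt (Theorem 2.2 in \cite{G15:3}), applied once weak-solution status is known, so the regularity claim reduces to the weak-solution claim.

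Next I would verify that $u$ solves the equation. The cleanest route: show $u$ is the weak solution in $\cH^s_0(B)$ of $(-\Delta)^su=f$, $u=0$ on $\R^N\setminus B$, by checking $\cE_s(u,\varphi)=\int_B f\varphi$ for all $\varphi\in\cH^s_0(B)$; by density it suffices to test with $\varphi\in C^\infty_c(B)$, and then Fubini plus the defining property of $G_s$ as the Green function (i.e. $(-\Delta)^s_x G_s(x,y)=\delta_y(x)$ in the distributional/duality sense, which is the content of Boggio's computation and of \cite{AJS16b}) yields the identity; the integration-by-parts Lemma \ref{ibyp} then upgrades this to the pointwise statement $(-\Delta)^su(x)=f(x)$ for $x\in B$ once interior regularity is in hand. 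Uniqueness in $\cH^s_0(B)$ is immediate from Theorem \ref{riesz:thm}. The boundary traces $D^{s-2}u=D^{s-1}u=0$ on $\partial B$ follow from the bound $|u(x)|\le C\delta(x)^s$: since $2-s+s=2>1$, one gets $\delta(x)^{2-s}u(x)=O(\delta(x)^2)\to 0$ and, after differentiating the kernel representation one more time in the radial direction, $\frac{\partial}{\partial|x|}[\delta(x)^{2-s}u(x)]=O(\delta(x))\to 0$; making the differentiation-under-the-integral-sign rigorous requires a corresponding bound on $\partial_{|x|}G_s(x,y)$, which again comes from differentiating Boggio's explicit formula.

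Finally, for the positivity statement I would argue that $G_s(x,y)>0$ whenever $x,y\in B$, $x\neq y$: indeed for such $x,y$ one has $\rho(x,y)=\delta(x)\delta(y)|x-y|^{-2}>0$, hence the integral $\int_0^{\rho(x,y)}\frac{t^{s-1}}{(t+1)^{N/2}}\,dt>0$ and $k_{N,s}|x-y|^{2s-N}>0$, so $G_s(x,y)>0$. Therefore if $f\geq 0$ on $B$ with $f\not\equiv 0$, then $u(x)=\int_B G_s(x,y)f(y)\,dy>0$ for every $x\in B$ (the integrand is nonnegative and strictly positive on a set of positive measure), giving the strong maximum principle. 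I expect the main obstacle to be the rigorous justification of the boundary-trace computations: one must differentiate the kernel representation in the radial variable up to the endpoint $\partial B$ and control the resulting singular integrals uniformly, which needs careful estimates on $G_s$ and its radial derivatives near $\partial B$ and near the diagonal simultaneously; the existence, uniqueness, interior regularity, decay bound, and positivity are comparatively routine once the kernel estimates are set up.
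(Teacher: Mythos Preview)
The paper does not actually prove this theorem: it is stated as a quotation of Theorem~1.1 in \cite{AJS16b} and Theorem~1.4 in \cite{AJS17a}, with no argument given in the present survey. So there is no ``paper's own proof'' to compare against here.

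That said, your outline is the standard route and matches what the cited references do. A couple of remarks. First, for the trace $D^{s-1}u=0$ you do not need to differentiate the kernel representation directly: once you know $u=0$ outside $\overline B$ and $D^{s-2}u=0$, the identity recorded as \eqref{p1} in this paper (from \cite[Lemma~2.1]{AJS17a}) gives $D^{s-1}u(z)=2\lim_{x\to z}u(x)/\delta(x)^{s-1}$, and the bound $|u(x)|\le C\delta(x)^s$ immediately yields the vanishing; this sidesteps the ``main obstacle'' you flag. Second, your chain ``weak solution $\Rightarrow$ Grubb regularity $\Rightarrow$ pointwise equation via Lemma~\ref{ibyp}'' is clean, but note that in \cite{AJS16b} the pointwise identity $(-\Delta)^s u=f$ is obtained more directly by computing $(-\Delta)^s$ on the explicit Boggio kernel rather than by passing through the variational formulation; either path is fine. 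Your positivity argument is exactly the right one.
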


\begin{figure}[h!]
\begin{center}
\includegraphics[height=4.5cm]{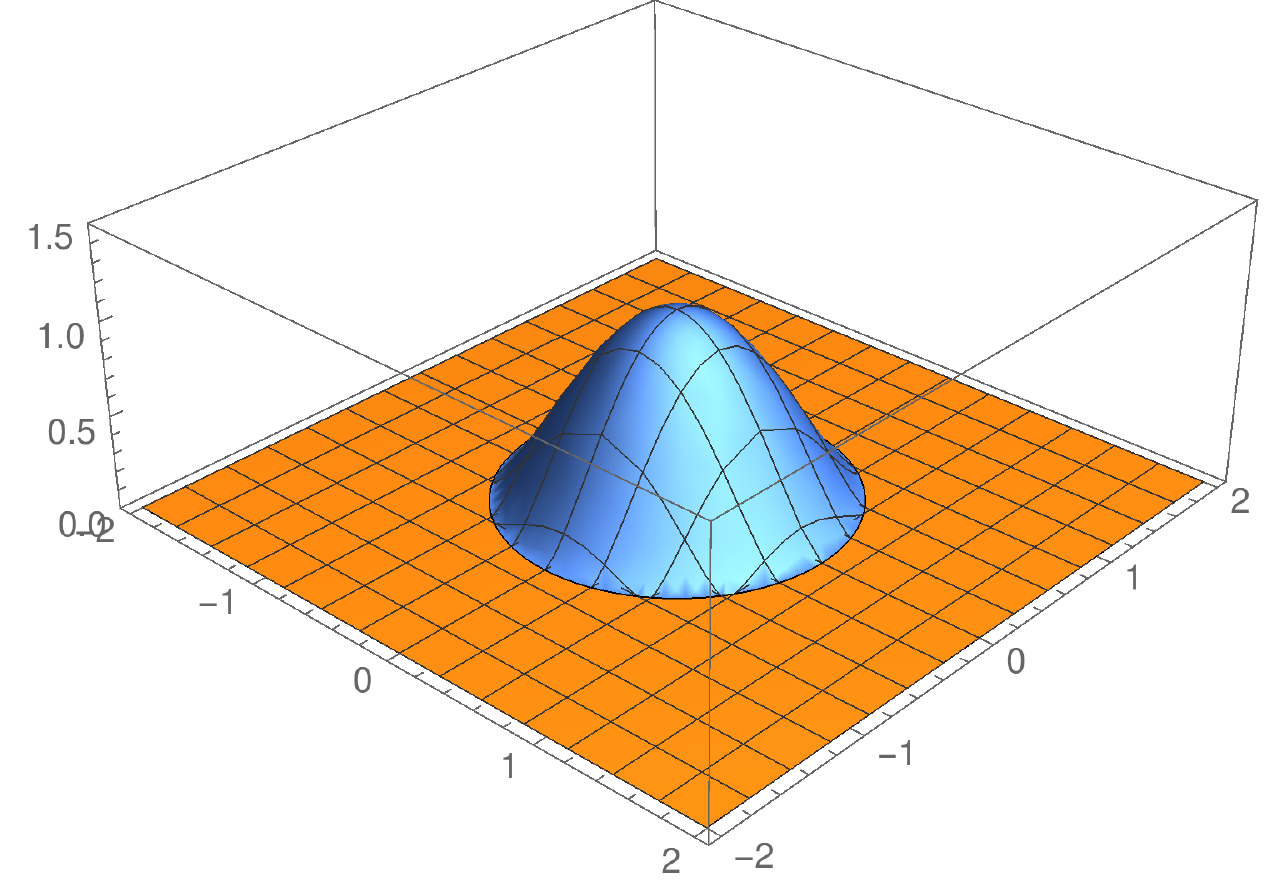}
\end{center}
\caption{Solution of \eqref{mp} with $f\equiv 1$.}\label{3f}
\end{figure}

This is a good moment to make an interesting remark. If $g=h=0$ on $\partial B$, then the function $u$ given by \eqref{Gsu} is the solution of \eqref{bilp} if $s=2$ and of \eqref{lp} if $s=1$. We can also consider the function arising from using twice the kernel $G_1$, namely,
\begin{align*}
 \widetilde v(x):=\int_{B}G_{1}(x,z)\int_{B}G_{1}(z,y)f(y)\ dy\ dz.
\end{align*}
Then $\widetilde v$ is a solution of $\Delta^2 \widetilde v = f$ in $B$ and $\widetilde v$ satisfies \emph{Navier boundary conditions}, that is, $\widetilde v=\Delta \widetilde v =0$ on $\partial B$. Observe that $\widetilde v$ is not a solution of \eqref{bilp} (for example, if $f>0$ in $B$ then $-\partial_\nu \widetilde v >0$ on $\partial B$, by Hopf Lemma). In the fractional case $s\in (1,2)$, one can also consider the function $u$ given by \eqref{Gsu} with $f\equiv 1$ in $B$ and compare it with $v,w:\R^N\to\R$ given by
\begin{align*}
 v(x):=\ \int_{B}G_{s-1}(x,z)\int_{B}G_{1}(z,y)\ dy\ dz,\qquad w(x):=\ \int_{B}G_1(x,z)\int_{B}G_{s-1}(z,y)\ dy\ dz.
\end{align*}
We show in Appendix \ref{A} that $u$, $v$, and $w$ solve different boundary conditions.  Furthermore, we also show the following surprising fact:
\begin{align*}
\text{$(-\Delta)^s u=(-\Delta)^s v=1$ in $B$, but $(-\Delta)^s w\neq 1$ in $B$.}
\end{align*}
There are several reasons for this somewhat unexpected behavior. A particularly important factor is the regularity of these functions at the boundary $\partial B$. We refer to the explicit calculations in Appendix \ref{A} for the details.

\subsection{The boundary Poisson kernels}\label{poisson:subsec}

For $x\in\R^N$ and $z\in\partial B$ with $x\neq z$ let
\begin{equation}\label{Eden}
\begin{aligned}
 E_{s-1}(x,z)&:=\frac{1}{2\omega_N}\frac{\delta(x)^s}{|x-z|^N},\\
 E_{s-2}(x,z)&:=\frac{1}{4\omega_N}\frac{\delta(x)^{s}}{|x-z|^{N+2}}(N\delta(x)-(N-4)|x-z|^2),
\end{aligned}
\end{equation}
where $\omega_N$ is a normalization constant given in \eqref{c}.  The kernels \eqref{Eden} are called the \emph{boundary Poisson kernels} for $(-\Delta)^s$ in $B$ or \emph{Edenhofer kernels} in honor of Johann Edenhofer who first state their formula in the case $s\in\N$, see \cite{E75-2}.  Using these kernels we can now solve the following boundary value problems.

\begin{thm}[Theorem 1.4 in \cite{AJS17a}]
Let $g\in C^{1,0}(\partial B)$ and $u:\R^N\to\R$ be given by 
 \begin{align*}
 u(x)=\int_{\partial B}E_{s-2}(x,\theta)\ g(\theta)\ d\theta\qquad \text{for $x\in \R^N$}.
 \end{align*}
 Then, $u\in C^{\infty}(B)$, $\delta^{2-s}u\in C^{1,0}(\overline{B})$, and
 \begin{align}\label{p:ed}
 (-\Delta)^s u=0\ \ \text{ in }\ \ B,\quad u=0\ \ \text{ on } \mathbb R^N\backslash\overline{B},\quad \dn^{s-2} u= g\ \ \text{ on } \partial B,\quad \dn^{s-1} u= 0\ \ \text{ on } \partial B.
 \end{align}
\end{thm}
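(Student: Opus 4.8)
The plan is to verify the four assertions in \eqref{p:ed} one at a time, treating the interior PDE, the exterior condition, the regularity statement $\delta^{2-s}u\in C^{1,0}(\overline B)$, and the two trace identities separately. For the interior equation $(-\Delta)^s u = 0$ in $B$: since $E_{s-2}(\cdot,\theta)$ is built from $\delta(x)^s$ divided by powers of $|x-\theta|$, it is smooth in $B$ for each fixed $\theta\in\partial B$, and one checks (or cites the known fact) that $x\mapsto E_{s-2}(x,\theta)$ is $s$-harmonic in $B$ away from $\theta$; differentiation under the integral sign over the compact set $\partial B$ (uniformly in $x$ on compact subsets of $B$, where $|x-\theta|$ stays bounded below) then gives $(-\Delta)^s u(x) = \int_{\partial B}(-\Delta)^s_x E_{s-2}(x,\theta)\,g(\theta)\,d\theta = 0$, using the finite-difference evaluation \eqref{Ds}. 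The exterior condition $u\equiv 0$ on $\R^N\setminus\overline B$ is immediate from the factor $\delta(x)^s$, which vanishes identically there. This also shows $u\in C^\infty(B)$.

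For the regularity claim $\delta^{2-s}u\in C^{1,0}(\overline B)$ and the first trace $D^{s-2}u = g$ on $\partial B$: multiply through by $\delta(x)^{2-s}$, so that $\delta(x)^{2-s}u(x) = \int_{\partial B}\delta(x)^{2-s}E_{s-2}(x,\theta)\,g(\theta)\,d\theta$, and note $\delta(x)^{2-s}E_{s-2}(x,\theta) = \frac{1}{4\omega_N}\frac{\delta(x)^2}{|x-\theta|^{N+2}}\bigl(N\delta(x)-(N-4)|x-\theta|^2\bigr)$, which is now a bounded kernel near the boundary diagonal because $\delta(x)^2 = (1-|x|)^2(1+|x|)^2 \lesssim |x-\theta|^2$ for $x\in B$. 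The key computation is to show this kernel behaves, as $x\to z\in\partial B$ normally, like an approximate identity on $\partial B$: one must identify $\int_{\partial B}\delta(x)^{2-s}E_{s-2}(x,\theta)\,d\theta \to 1$ and show the mass concentrates at $\theta=z$. This is a standard Poisson-kernel-type estimate, carried out by splitting $\partial B$ into a small cap around $z$ and its complement and using $|x-\theta|\gtrsim |z-\theta|$ away from the cap; since $u\equiv 0$ outside $\overline B$ the outer limit $\lim_{y\to z,\,y\notin B}u(y)$ in the definition \eqref{t:def} vanishes, so $D^{s-2}u(z)$ reduces to $\lim_{x\to z}\delta(x)^{2-s}u(x) = g(z)$ by continuity of $g$. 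The $C^{1,0}$ regularity up to $\overline B$ follows by differentiating the (now regular) kernel $\delta^{2-s}E_{s-2}$ once and repeating the approximate-identity argument, which also requires the normalization constants in \eqref{Eden} to match; this normalization check against the case $s\in\N$ treated by Edenhofer \cite{E75-2} pins down the constants $\tfrac{1}{4\omega_N}$ and the $N$ versus $N-4$ coefficients.

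For the second trace $D^{s-1}u = 0$ on $\partial B$: by definition \eqref{red} (again the exterior part drops since $u\equiv 0$ outside $\overline B$) we need $\lim_{x\to z}\frac{\partial}{\partial |x|}\bigl[\delta(x)^{2-s}u(x)\bigr] = 0$ in the normal direction. Writing $\delta(x)^{2-s}u(x) = \int_{\partial B}K(x,\theta)g(\theta)\,d\theta$ with $K = \delta^{2-s}E_{s-2}$, and noting we have just shown $\delta^{2-s}u\in C^{1,0}(\overline B)$ with boundary value $g(z)$ at $z$, the radial derivative $\partial_{|x|}[\delta^{2-s}u]$ extends continuously to $\partial B$; its boundary value is $\int_{\partial B}\partial_{|x|}K(x,\theta)\big|_{x=z}\,g(\theta)\,d\theta$. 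The vanishing of this quantity is precisely the defining property built into the specific form of $E_{s-2}$ (the reason the kernel carries the extra factor $N\delta(x)-(N-4)|x-\theta|^2$ rather than just $\delta(x)^s/|x-\theta|^{N+2}$) — it is the analogue of the Edenhofer kernel being chosen so that the \emph{second} Dirichlet datum is zero while the first one is prescribed, exactly mirroring how $E_{s-1}$ will handle the opposite pairing. Concretely one computes $\partial_{|x|}K(x,\theta)$ along the normal ray, takes the limit, and checks the resulting boundary kernel integrates against $g$ to zero; this can also be deduced cleanly from the identity $E_{s-2} = -\tfrac{1}{N}\partial_\nu$ (in an appropriate sense) of a primitive kernel, or simply quoted from \cite{AJS17a}.

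\textbf{Main obstacle.} The delicate point is the boundary analysis of the weighted kernel $K(x,\theta) = \delta(x)^{2-s}E_{s-2}(x,\theta)$: establishing simultaneously that it is an approximate identity on $\partial B$ (giving $D^{s-2}u=g$), that it has enough regularity for $\delta^{2-s}u\in C^{1,0}(\overline B)$, and that its normal derivative produces zero for the second trace — all with the exact normalization constants. This is the genuine content of the theorem; the interior $s$-harmonicity and the trivial exterior vanishing are routine. I would handle it by a careful cap-splitting estimate near the boundary diagonal, reducing everything to one- or two-dimensional integrals in the normal variable, and by invoking the corresponding computations in \cite{AJS17a,E75-2} for the constants rather than rederiving them.
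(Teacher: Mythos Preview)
The paper is a survey and does \emph{not} give its own proof of this theorem: it is simply stated with the attribution ``Theorem 1.4 in \cite{AJS17a}'', and all the technical work is deferred to that reference. So there is no proof in the present paper to compare against.

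That said, your outline is a reasonable sketch of how such a proof should go, and it is roughly what one expects the argument in \cite{AJS17a} to look like. A few remarks on where your sketch is loose. First, the $s$-harmonicity of $x\mapsto E_{s-2}(x,\theta)$ in $B$ is not something one ``checks'' by inspection; the cleanest route (and the one the paper hints at just after Theorem~\ref{Eden1:thm}) is via the identity
\[
E_{s-2}(x,z)=c_2\lim_{y\to z}\frac{\partial^3}{\partial (|y|^2)^3}\bigl[\delta(y)^{2-s}G_s(x,y)\bigr],
\]
which inherits $s$-harmonicity in $x$ from the Green function $G_s$ (Theorem~\ref{green:thm}). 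Your direct-verification plan would require an independent computation. Second, your treatment of the trace $D^{s-1}u=0$ contains a genuine slip: you write that ``its boundary value is $\int_{\partial B}\partial_{|x|}K(x,\theta)\big|_{x=z}\,g(\theta)\,d\theta$'', i.e.\ you push the limit and derivative through the integral. For a kernel that behaves like an approximate identity this interchange is exactly what fails in general --- the mass concentrates and the naive pointwise limit of $\partial_{|x|}K$ at $x=z$ is not even integrable over $\partial B$. The correct argument (which you partly acknowledge under ``main obstacle'') is another cap-splitting estimate on $\partial_{|x|}K$ itself, showing its total mass tends to zero rather than evaluating a pointwise limit; this is where the precise algebraic structure $N\delta(x)-(N-4)|x-\theta|^2$ of the Edenhofer kernel is used. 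So your diagnosis of the main obstacle is right, but the line just before it is not how one resolves it.
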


\begin{figure}[h!]
\begin{center}
\includegraphics[height=5cm]{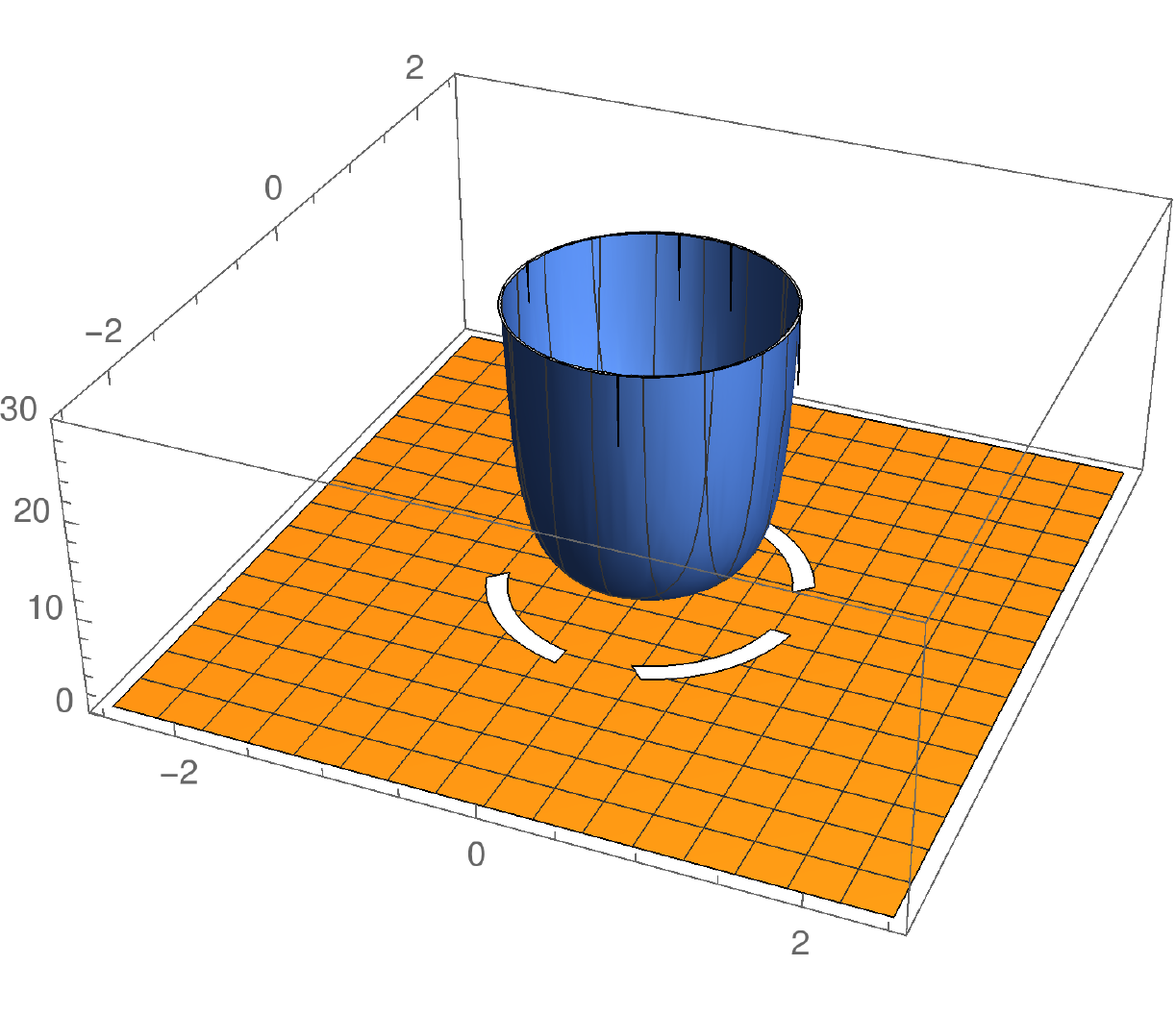}
\end{center}
\caption{Example of a solution of \eqref{p:ed} with $g\equiv 1$.}\label{5f}
\end{figure}

\begin{thm}[Theorem 1.4 in \cite{AJS17a}]\label{Eden1:thm}
Let $g\in C(\partial B)$ and $u:\R^N\to\R$ be given by 
\begin{align*}
u(x)=\int_{\partial B}E_{s-1}(x,\theta)\ g(\theta)\ d\theta\qquad \text{for $x\in \R^N$}.
\end{align*}
Then, $u\in C^{\infty}(B)$, $\delta^{1-s}u\in C(\overline{B})$, and
\begin{align}\label{p:ed:2}
(-\Delta)^s u=0\ \ \text{ in }\ \ B,\quad u=0\ \ \text{ on } \mathbb R^N\backslash\overline{B},\quad \dn^{s-2} u=0\ \ \text{ on } \partial B,\quad \dn^{s-1} u= g\ \ \text{ on } \partial B.
\end{align}
\end{thm}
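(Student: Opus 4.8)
\textbf{Proof plan for Theorem~\ref{Eden1:thm}.}
The strategy is to verify each of the three assertions (interior smoothness plus the equation, the weighted boundary regularity, and the trace identities) by working directly with the explicit kernel $E_{s-1}$. First I would establish $u\in C^\infty(B)$ and $(-\Delta)^su=0$ in $B$: for fixed $x\in B$ the function $\theta\mapsto E_{s-1}(x,\theta)$ is smooth (since $|x-\theta|\ge 1-|x|>0$), and differentiation under the integral sign in $x$ is justified because $g$ is bounded and $\partial B$ is compact; then one checks that $x\mapsto E_{s-1}(x,z)$ is itself $s$-harmonic in $B$ in the pointwise sense \eqref{Ds}, which can be done either by a direct computation on the hypersingular integral or, more conveniently, by relating $E_{s-1}$ to the Green function $G_s$ via $E_{s-1}(x,z)=-\lim_{y\to z,\,y\in B}\frac{\partial}{\partial|y|}\big[\delta(y)^{2-s}(-\Delta)^s_yG_s(x,y)\cdot(\text{normalization})\big]$-type boundary limits, and invoking that $G_s(x,\cdot)$ is $s$-harmonic away from the diagonal (Theorem~\ref{green:thm}). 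Since $g\in\cL^1_s$-data situations are not at issue here ($u$ vanishes outside $\overline B$ so $u\in\cL^1_s$ is immediate once we know the growth of $E_{s-1}$ near $\partial B$), the pointwise evaluation \eqref{Ds} applies.

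Next I would treat the weighted boundary behavior $\delta^{1-s}u\in C(\overline B)$ and the two trace identities. The heart of the matter is the asymptotic expansion of $u(x)$ as $x\to z_0\in\partial B$ along the normal direction. Writing $\delta(x)=1-|x|^2\to 0$, the factor $\delta(x)^s$ in $E_{s-1}$ forces $u$ to vanish like $\delta^s$ away from the "diagonal contribution," but the integral $\int_{\partial B}\frac{\delta(x)^s}{|x-\theta|^N}g(\theta)\,d\theta$ concentrates near $\theta=z_0$ as $x\to z_0$, and a standard mollifier/approximate-identity argument (rescaling $\theta-z_0$ by $\delta(x)$, using $\int_{\R^{N-1}}\frac{dt}{(1+|t|^2)^{N/2}}<\infty$ and the normalization $\tfrac{1}{2\omega_N}$) shows $\delta(x)^{1-s}u(x)\to g(z_0)$. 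This gives simultaneously $\delta^{1-s}u\in C(\overline B)$ and, comparing with the definitions \eqref{red}, that $D^{s-2}u(z_0)=\lim\delta(x)^{2-s}u(x)=\lim\delta(x)\cdot\delta(x)^{1-s}u(x)=0$ while $D^{s-1}u(z_0)=-\lim\frac{\partial}{\partial|x|}[\delta(x)^{2-s}u(x)]$ picks up the derivative of $\delta(x)\cdot g(z_0)$ plus lower-order terms, yielding $D^{s-1}u(z_0)=g(z_0)$ after the chain-rule computation $\frac{\partial}{\partial|x|}\delta(x)=-2|x|\to-2$ at $\partial B$ is balanced against the $\tfrac12\omega_N$ normalization and the factor conventions in \eqref{t:def}.

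In practice I expect most of these computations are already available in \cite{AJS17a} for the companion traces $\widetilde D^{s-1},\widetilde D^{s-2}$ of Remark~\ref{rem}; the genuinely new work is only the bookkeeping that translates a result stated for $\widetilde D$ into one for $D$, using the two identities recorded in Remark~\ref{rem}, namely $\frac{\partial}{\partial|x|^2}f=\tfrac12\frac{\partial}{\partial|x|}f$ at $\partial B$ and the fact that the outer limit $\lim_{y\to z,\,y\notin B}u(y)=0$ here since $u\equiv0$ on $\R^N\setminus\overline B$. So the cleanest route is: cite Theorem~1.4 of \cite{AJS17a} for the interior regularity, the $s$-harmonicity, and the analogue of \eqref{p:ed:2} with $\widetilde D$ in place of $D$; then observe that $u\equiv0$ outside $\overline B$ makes $D$ and $\widetilde D$ coincide up to the harmless factor-of-two rescaling on the normal derivative, which is absorbed into the normalization constants $\tfrac{1}{2\omega_N}$, $\tfrac{1}{4\omega_N}$ chosen in \eqref{Eden}.

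\textbf{Main obstacle.} The delicate step is the boundary asymptotic analysis: showing that the singular integral $\int_{\partial B}\frac{\delta(x)^s}{|x-\theta|^N}g(\theta)\,d\theta$ behaves, to leading and subleading order in $\delta(x)$, exactly like $\delta(x)^{s-1}g(z_0)$ plus a term whose normal derivative is controlled, uniformly as $z_0$ ranges over $\partial B$ and for merely continuous $g$. This requires a careful splitting of $\partial B$ into a shrinking cap around $z_0$ and its complement, a change of variables flattening the boundary, and the dominated-convergence/equicontinuity estimates that make the approximate-identity limit rigorous; the subleading term needed for the $D^{s-1}$ trace is where one must be most careful, since it involves differentiating the expansion in the normal variable and checking that the error terms are $o(\delta(x))$ after that differentiation.
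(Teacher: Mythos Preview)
The paper does not give its own proof of this theorem: it is stated as a direct citation of Theorem~1.4 in \cite{AJS17a}, with the only change being the passage from the traces $\widetilde D^{s-2},\widetilde D^{s-1}$ of \eqref{other} to the traces $D^{s-2},D^{s-1}$ of \eqref{t:def}. Your ``cleanest route'' (cite \cite{AJS17a} and translate conventions via Remark~\ref{rem}) is therefore exactly what the paper does, and the bookkeeping you describe is the correct one; indeed, the identity $D^{s-1}f(z)=2\lim_{x\to z}\delta(x)^{1-s}f(x)$ recorded later in the paper as \eqref{p1} together with the classical Poisson-kernel limit $\tfrac{1}{\omega_N}\int_{\partial B}\tfrac{\delta(x)}{|x-\theta|^N}g(\theta)\,d\theta\to g(z_0)$ gives precisely $D^{s-1}u=g$ once the $\tfrac{1}{2\omega_N}$ normalization in \eqref{Eden} is taken into account (so your limit $\delta^{1-s}u\to g(z_0)$ should in fact read $\delta^{1-s}u\to\tfrac12 g(z_0)$, but the end result is unchanged).

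Your longer direct sketch is sound in structure, but one formula is garbled: the link between $E_{s-1}$ and $G_s$ does not involve $(-\Delta)^s_y G_s$. The correct relation, stated at the end of Section~\ref{poisson:subsec}, is
\[
E_{s-1}(x,z)=c_1\lim_{y\to z}\frac{\partial^2}{\partial(|y|^2)^2}\bigl[\delta(y)^{2-s}G_s(x,y)\bigr],
\]
i.e.\ a weighted normal derivative of the Green function itself, and it is the $s$-harmonicity of $G_s(\cdot,y)$ for $y\neq x$ (not of $(-\Delta)^s_y G_s$) that one differentiates under. The quantity $-(-\Delta)^s_y G_s(x,y)$ is instead the \emph{nonlocal} Poisson kernel $\Gamma_s(x,y)$, cf.\ \eqref{GsDsG}. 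This does not affect your overall argument, since the approximate-identity computation you outline already suffices to establish both $(-\Delta)^s u=0$ in $B$ (by recognizing $\delta(x)^{1-s}E_{s-1}(x,\theta)$ as half the classical harmonic Poisson kernel and using that $\delta^{s-1}\times\text{harmonic}$ is $s$-harmonic, which is the route taken in \cite{AJS17a}) and the boundary traces.
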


\begin{figure}[h!]
\begin{center}
\includegraphics[height=5cm]{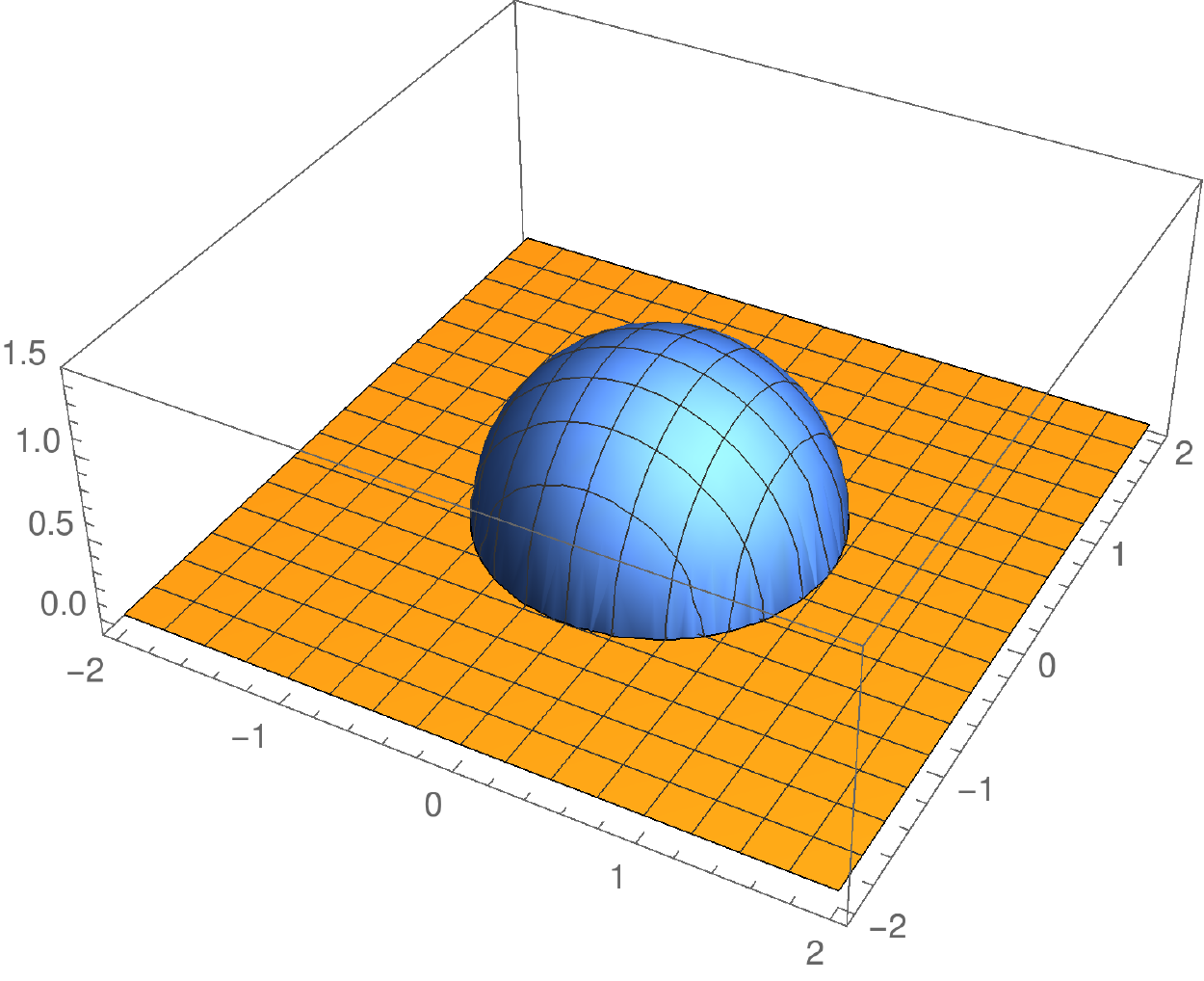}
\end{center}
\caption{Example of a solution of \eqref{p:ed:2} with $g\equiv 1$.}\label{6f}
\end{figure}

As mentioned at the beginning of the section, we see in Figure \ref{5f} that solutions of \eqref{p:ed} are \emph{singular} whenever $g\neq 0$ on $\partial B$.  In this sense, the trace $D^{s-2}u$ can also be seen as a \emph{singular trace} or \emph{singular profile} of $u$.   Figures \ref{3f}--\ref{6f} show the very different boundary behaviors that come into play when considering higher-order fractional boundary value problems. We also note that there is a relationship between the Green function $G_s$ and the boundary Poisson kernels, namely,
\begin{align*}
 E_{s-1}(x,z)&=c_1\lim_{y\to z}\frac{\partial^2}{\partial (|y|^2)^2}[\delta(y)^{2-s}G_s(x,y)],\quad 
 E_{s-2}(x,z)=c_2\lim_{y\to z}\frac{\partial^3}{\partial (|y|^2)^3}[\delta(y)^{2-s}G_{s}(x,y)]
\end{align*}
for $z\in\partial B$ and $x\in\R^N$, where $c_1$ and $c_2$ are suitable constants, see \cite[Lemma 1.8]{AJS17a}.

\subsection{The nonlocal Poisson kernel}

Let $m\in\{0,1\}$, $\sigma\in(0,1),$ and $s=m+\sigma\in(0,2)$.  For $x\in \mathbb R^N$ and $y\in \R^N\backslash \overline{B}$ let 
\begin{equation}\label{PK}
\Gamma_s(x,y)\ :=\ (-1)^m\frac{\gamma_{N,s-1}}{{|x-y|}^N}\frac{(1-|x|^2)_+^s}{(|y|^2-1)^s},
\end{equation}
where $\gamma_{N,s-1}$ is a positive normalization constant given in \eqref{c}.  The kernel \eqref{PK} is called the \emph{nonlocal Poisson kernel} for $(-\Delta)^s$ in $B$, and it can be used to construct $s$-harmonic solutions with prescribed values in $\R^N\backslash\overline{B}$.

\begin{thm}[Theorem 1.1 in \cite{AJS17a} and Theorem 1.6 in \cite{AJS17b}]\label{poisson:thm}
Let $s\in(1,2)$, $G_s$ as in \eqref{green}, and $\Gamma_s$ as in \eqref{PK}, then
\begin{align}\label{GsDsG}
\Gamma_s(x,y)=\ -(-\Delta)_y^s G_s(x,y)\qquad \text{for $x\in \mathbb R^N$, $y\in \R^N\setminus\overline{B}$} 
\end{align}
and, if $\psi\in \cL_s^1$ with $\psi=0$ in $B_{r}$ for some $r>1$,
and $u:\R^N\to \R$ is given by 
\begin{equation}\label{u:def}
u(x)\ =\int_{\R^N\setminus\overline{B}}\Gamma_s(x,y)\psi(y)\;dy\ +\ \psi(x),
 \end{equation}
then $u\in C^{\infty}(B)\cap C^s(B_{r})\cap H^s(B_{\rho})$ for any $\rho\in(1,r)$ and $u$ is the unique pointwise solution in the space $C^s(\overline{B})\cap H^s(B)$ of 
\begin{equation}\label{u:prob}
(-\Delta)^s u = 0\quad \text{ in }B,\qquad u=\psi\quad \text{ on }\R^N\backslash\overline{B},
\qquad D^{s-2}u=D^{s-1}u=0\quad \text{ on }\partial B.
\end{equation}
\end{thm}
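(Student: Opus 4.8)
The plan is to verify the three defining properties of $u$ in \eqref{u:prob} one at a time, using the identity \eqref{GsDsG} as the central tool, and then to establish uniqueness via the variational theory. First I would address the $s$-harmonicity in $B$. Differentiating under the integral sign in \eqref{u:def} (which is justified because for $x\in B$ the kernel $\Gamma_s(x,\cdot)$ is smooth and decays like $|y|^{-N-2s}$ against $\psi\in\cL^1_s$, while $\psi$ itself vanishes on $B_r\supset B$), one computes
\begin{align*}
(-\Delta)^s u(x) = \int_{\R^N\setminus\overline B}\bigl[(-\Delta)^s_x\Gamma_s(x,y)\bigr]\psi(y)\,dy + (-\Delta)^s\psi(x).
\end{align*}
By \eqref{GsDsG} and the symmetry $G_s(x,y)=G_s(y,x)$, the bracketed term equals $-(-\Delta)^s_x(-\Delta)^s_y G_s(x,y)$; combined with the fact that $G_s(x,\cdot)$ is (for fixed $x\in B$) a Green function, so that $(-\Delta)^s_x\int_{\R^N\setminus\overline B}(-\Delta)^s_y G_s(x,y)\psi(y)\,dy$ reproduces $(-\Delta)^s\psi(x)$ up to sign — this is precisely the content of Theorem \ref{poisson:thm} in \cite{AJS17a,AJS17b} and the counterterm $+\psi(x)$ in \eqref{u:def} is engineered to cancel it. Thus $(-\Delta)^s u = 0$ in $B$. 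The regularity claims $u\in C^\infty(B)\cap C^s(B_r)\cap H^s(B_\rho)$ follow from standard interior elliptic regularity for $s$-harmonic functions plus the explicit smoothness of $\Gamma_s(x,y)$ away from the diagonal and the membership $\psi\in C^s(B_r)$ (one may quote \cite{G15:2,G15:3}).

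Next the exterior condition: for $x\in\R^N\setminus\overline B$ one has $(1-|x|^2)_+=0$, hence $\Gamma_s(x,y)=0$ identically, so \eqref{u:def} collapses to $u(x)=\psi(x)$, giving $u=\psi$ on $\R^N\setminus\overline B$. For the two homogeneous traces $D^{s-2}u=D^{s-1}u=0$ on $\partial B$, I would use the definition \eqref{t:def}: since $u=\psi$ outside $B$ and $\psi\equiv 0$ near $\partial B$, both inner limits $\lim_{y\to z}u(y)$ vanish, so by \eqref{red} the traces reduce to $\lim_{x\to z}\delta(x)^{2-s}u(x)$ and $-\lim_{x\to z}\partial_{|x|}[\delta(x)^{2-s}u(x)]$. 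The key estimate is that the interior part $x\mapsto\int_{\R^N\setminus\overline B}\Gamma_s(x,y)\psi(y)\,dy$ behaves like $\delta(x)^s$ near $\partial B$ (it carries the factor $(1-|x|^2)_+^s$ out front, and the remaining integral $\int(|y|^2-1)^{-s}|x-y|^{-N}\psi(y)\,dy$ stays bounded as $x\to z$ because $\psi$ is supported away from $\partial B$, so $|x-y|$ is bounded below). Hence $\delta(x)^{2-s}u(x)=O(\delta(x)^2)\to 0$ and its radial derivative is $O(\delta(x))\to 0$, yielding both traces zero; this is where one must be slightly careful with the $C^{1,0}$-up-to-boundary statement for $\delta^{2-s}u$, but again it is covered by the cited results on trace regularity.

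Finally, uniqueness in $C^s(\overline B)\cap H^s(B)$: given two solutions, their difference $w$ is $s$-harmonic in $B$, vanishes on $\R^N\setminus\overline B$, and has $D^{s-2}w=D^{s-1}w=0$; in particular $w\in\cH^s_0(B)$ with $(-\Delta)^s w=0$ weakly, so Theorem \ref{riesz:thm} (uniqueness of the weak solution, here with $f=0$) forces $w\equiv 0$ — one must check that the pointwise solution is also the weak one, which is exactly Lemma \ref{ibyp}. I expect the main obstacle to be the boundary-trace analysis: establishing the sharp $\delta(x)^s$ decay of the nonlocal-Poisson part uniformly up to $\partial B$ and its one derivative, and reconciling the pointwise trace values in \eqref{t:def} with the $\delta^{2-s}$-weighted $C^{1,0}$ regularity, is the delicate step; the $s$-harmonicity and exterior-value parts are essentially formal once \eqref{GsDsG} is in hand.
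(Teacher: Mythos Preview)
The paper does not give its own proof of this theorem; it is stated with attribution to \cite{AJS17a,AJS17b} and no argument follows. So there is nothing to compare against directly, and I will assess your sketch on its own merits.

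The exterior condition, the trace computation, and the uniqueness argument are essentially correct. The factor $(1-|x|^2)_+^s$ in $\Gamma_s$ does give the $\delta(x)^s$ decay you describe (since $\psi$ is supported away from $\partial B$, the remaining integral is bounded), and the uniqueness via $\cH^s_0(B)$ and Theorem~\ref{riesz:thm}/Lemma~\ref{ibyp} is the right mechanism.

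The gap is in the $s$-harmonicity step. The identity \eqref{GsDsG} is a statement about $(-\Delta)^s_y G_s(x,y)$ for $y$ outside $\overline B$; it says nothing about $(-\Delta)^s_x\Gamma_s(x,y)$, and the interchange $(-\Delta)^s_x(-\Delta)^s_y G_s = (-\Delta)^s_y(-\Delta)^s_x G_s$ is not justified because $G_s(\cdot,y)$ is identically zero for $y\notin\overline B$ (so one side is trivially zero while the other is not) and $G_s$ is not regular enough across $\partial B$ for such an exchange. Worse, when you write ``this is precisely the content of Theorem~\ref{poisson:thm}'' to justify the cancellation, you are invoking the statement you are proving; the argument is circular at exactly the point where the real work lies.

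The standard route (and the one in the cited references) is the reverse of what you attempt: one does not apply $(-\Delta)^s_x$ to the kernel. Instead, for $x\in B$ one uses \eqref{GsDsG} and a self-adjointness (Fubini/bilinear-form) argument in the \emph{second} variable to rewrite
\[
\int_{\R^N\setminus\overline B}\Gamma_s(x,y)\psi(y)\,dy
= -\int_{\R^N\setminus\overline B}(-\Delta)^s_y G_s(x,y)\,\psi(y)\,dy
= -\int_B G_s(x,z)\,(-\Delta)^s\psi(z)\,dz,
\]
using that $\psi\equiv 0$ on $B_r$ so there is no contribution from the Dirac mass of $(-\Delta)^s_y G_s(x,\cdot)$ at $y=x$, and that $G_s(x,\cdot)$ vanishes outside $B$. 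This exhibits $u$ on $B$ as a Green potential with data $-(-\Delta)^s\psi\in C^\infty(\overline B)$, and then Theorem~\ref{green:thm} gives $(-\Delta)^s u = -(-\Delta)^s\psi + (-\Delta)^s\psi = 0$ in $B$ together with the regularity and trace statements in one stroke. The delicate part is justifying the self-adjointness step for the singular function $G_s(x,\cdot)$; that is where the actual analysis in \cite{AJS17a,AJS17b} goes, and it is precisely what your sketch skips.
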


\begin{figure}[h!]
\begin{center}
\includegraphics[height=5cm]{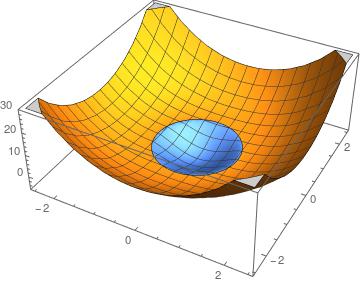}
\end{center}
\caption{Example of a solution of \eqref{u:prob}. Observe that although the data $\psi$ is positive in $\R^N\backslash B$, the solution $u$ is negative in $B$.  This is a higher-order phenomenon, which does not happen for $s \in(0,1)$, \cite{AJS17a}.
}
\end{figure}

Observe that in Theorem \ref{poisson:thm} the data $\psi$ is assumed to be zero close to the boundary $\partial B$, which implies that the traces \eqref{t:def} reduce to \eqref{red}.  If $\psi$ is not zero close to $\partial B$ then \eqref{u:def} might not be integrable and in this case one needs a different kernel. In fact, this case requires to use a lower-order nonlocal Poisson kernel together with a suitable correction using the boundary Poisson kernel $E_{s-1}$.  Recall that $\chi_U$ denotes the characteristic function of $U$, that is, $\chi_U(y)=1$ if $y\in U$ and $\chi_U(y)=0$ if $y\not\in U$. The next result is new.

\begin{thm}\label{xtra:thm} Let $\psi\in \cL_{s}^1\cap C^{s-1+\alpha}(B_r)$, $\psi_1:= \psi\chi_{B_r}$, $\psi_2:=\psi\chi_{\mathbb R^N\backslash B_r}$ for some $r>1$, $\alpha\in(0,1)$, and assume that
\begin{align}\label{hyp}
 z\mapsto \int_{\mathbb R^N\backslash \overline{B}}\frac{\psi_1(y)-\psi_1(z)}{|z-y|^N(|y|^2-1)^{s-1}}\ dy\quad \text{ belongs to }C(\partial B).
\end{align}
Let $u:\R^N\to \R$ be given by 
\begin{align*}\label{u:def}
u(x)\ =\int_{\R^N\setminus\overline{B}}&\Gamma_{s}(x,y)\psi_2(y)\;dy+\int_{\R^N\setminus\overline{B}}\Gamma_{s-1}(x,y)\psi_1(y)\;dy\\
&-2\gamma_{N,s-1}\int_{\partial B}E_{s-1}(x,z)\int_{\mathbb R^N\backslash \overline{B}}\frac{\psi_1(y)-\psi_1(z)}{|z-y|^N(|y|^2-1)^{s-1}}\ dy\ dz\ +\ \psi(x)\chi_{\R^N\backslash B},
\end{align*}
then $u\in C^{\infty}(B)\cap C^{s-1}(\overline{B})\cap \cL^1_s$ is a pointwise solution of
\begin{equation*}
(-\Delta)^s u = 0\quad \text{ in }B,\qquad u=\psi\quad \text{ on }\R^N\backslash\overline{B},
\qquad D^{s-2}u=D^{s-1}u=0\quad \text{ on }\partial B.
\end{equation*}
\end{thm}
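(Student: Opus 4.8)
The plan is to verify the four claimed properties of $u$ in turn, treating the three integral terms plus the correction $\psi\chi_{\R^N\backslash B}$ separately and then superposing. First I would record the decomposition $u = u_2 + u_1 - u_E + \psi\chi_{\R^N\backslash B}$, where $u_2(x):=\int_{\R^N\setminus\overline B}\Gamma_s(x,y)\psi_2(y)\,dy$, $u_1(x):=\int_{\R^N\setminus\overline B}\Gamma_{s-1}(x,y)\psi_1(y)\,dy$, and $u_E$ is the boundary correction built from $E_{s-1}$. For $u_2$ the hypotheses match exactly the setting of Theorem~\ref{poisson:thm} (since $\psi_2$ vanishes on $B_r$, hence near $\partial B$), so $u_2$ is $s$-harmonic in $B$, equals $\psi_2$ on $\R^N\setminus\overline B$, and has vanishing traces $D^{s-2}u_2 = D^{s-1}u_2 = 0$ on $\partial B$; the regularity $u_2\in C^\infty(B)\cap C^{s-1}(\overline B)\cap\cL^1_s$ follows likewise. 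The term $u_1$ is handled by the \emph{lower-order} nonlocal Poisson kernel $\Gamma_{s-1}$, for which the $s'=s-1\in(0,1)$ version of Theorem~\ref{poisson:thm} (available since, as the excerpt states, all these results hold for $s>0$; here one uses the $s\in(0,1)$ theory directly) gives that $u_1$ is $(s-1)$-harmonic in $B$ and equals $\psi_1$ on $\R^N\setminus\overline B$. The key computational step is then to apply $(-\Delta)$ to $u_1$: one must show $(-\Delta)^s u_1 = (-\Delta)(-\Delta)^{s-1}u_1 = 0$ in $B$ because $(-\Delta)^{s-1}u_1$ vanishes in $B$; here the growth condition $\psi\in\cL^1_s$ and the local regularity $\psi\in C^{s-1+\alpha}(B_r)$ ensure the composition \eqref{itdef} is licit and, via Lemma~\ref{eq:lem}, agrees with the finite-difference definition \eqref{Ds}.

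Next I would analyze the boundary traces of $u_1 + \psi_1\chi_{\R^N\backslash B}$. This is where the subtlety lies: unlike $\psi_2$, the function $\psi_1$ need not vanish near $\partial B$, so the traces $D^{s-2}$ and $D^{s-1}$ no longer reduce to the simplified form \eqref{red} and the outer limit $\lim_{y\to z,\,y\notin B}\psi_1(y)$ genuinely contributes. Using $(-\Delta)^s_y G_s(x,y) = -\Gamma_s(x,y)$ from \eqref{GsDsG} together with the analogous identity at order $s-1$, and the relation (quoted just after Theorem~\ref{Eden1:thm}) expressing $E_{s-1}$ as a weighted normal derivative of $\delta^{2-s}G_s$, one computes the trace of $u_1$. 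The computation should reveal that $D^{s-2}(u_1 + \psi_1\chi_{\R^N\backslash B})=0$ automatically (the $\delta^{2-s}$ weight kills the $\delta^{s-1}$-type decay of the $\Gamma_{s-1}$-potential against the outer limit of $\psi_1$), but that $D^{s-1}(u_1 + \psi_1\chi_{\R^N\backslash B})$ is a nonzero boundary function --- precisely the function $z\mapsto -2\gamma_{N,s-1}\int_{\R^N\setminus\overline B}\frac{\psi_1(y)-\psi_1(z)}{|z-y|^N(|y|^2-1)^{s-1}}\,dy$ appearing in hypothesis \eqref{hyp}. The $E_{s-1}$-correction term $u_E$ is then designed, by Theorem~\ref{Eden1:thm}, to be $s$-harmonic in $B$, to vanish on $\R^N\setminus\overline B$, to have $D^{s-2}u_E=0$, and to have $D^{s-1}u_E$ equal exactly to this boundary function; hypothesis \eqref{hyp} guarantees the data for $E_{s-1}$ lies in $C(\partial B)$, so Theorem~\ref{Eden1:thm} applies. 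Subtracting cancels the unwanted trace.

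Finally I would collect the pieces. In $B$: $(-\Delta)^s u = (-\Delta)^s u_2 + (-\Delta)^s u_1 - (-\Delta)^s u_E + (-\Delta)^s(\psi\chi_{\R^N\backslash B})$; the first three vanish in $B$ by the above, and $(-\Delta)^s(\psi\chi_{\R^N\backslash B})$ in $B$ is, by the nonlocal definition \eqref{Ds}, accounted for in the kernel terms --- more precisely the correct bookkeeping is that $u_2 + \psi_2\chi$ is $s$-harmonic and $u_1 + \psi_1\chi - u_E$ is $s$-harmonic in $B$, so the full sum is $s$-harmonic. On $\R^N\setminus\overline B$: $u_2=u_1=u_E=0$ there (the nonlocal Poisson potentials and the $E_{s-1}$-potential vanish off $\overline B$ by construction), so $u = \psi\chi_{\R^N\backslash B}=\psi$ there. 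On $\partial B$: $D^{s-2}u = D^{s-2}(u_2) + D^{s-2}(u_1+\psi_1\chi) - D^{s-2}u_E = 0+0-0 = 0$, and $D^{s-1}u = 0 + D^{s-1}(u_1+\psi_1\chi) - D^{s-1}u_E = 0$ by the exact cancellation just arranged. The regularity claim $u\in C^\infty(B)\cap C^{s-1}(\overline B)\cap\cL^1_s$ follows by adding the corresponding regularities of the four summands, using interior smoothing for the potentials (Theorem~\ref{poisson:thm}, Theorem~\ref{Eden1:thm}) and the assumption $\psi\in C^{s-1+\alpha}(B_r)\cap\cL^1_s$. The main obstacle I anticipate is the boundary-trace bookkeeping in the second paragraph: one must carefully track how the weight $\delta^{2-s}$, the normal derivative $\partial/\partial|x|$, and the outer limit in \eqref{t:def} interact with the singular behavior $\delta^{s}$ of $E_{s-1}$ and $\delta^{s-1}$-type behavior of the $\Gamma_{s-1}$-potential, and confirm that the $D^{s-1}$-trace of the $\Gamma_{s-1}$-potential is exactly the integral in \eqref{hyp} with the right constant $-2\gamma_{N,s-1}$ --- this is the computation that pins down the precise form of the correction term and is where \cite{AJS17a} (Lemma~1.8 there) and \cite{AJS17b} would be invoked.
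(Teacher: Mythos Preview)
Your proposal is correct and follows essentially the same architecture as the paper's proof: split $\psi=\psi_1+\psi_2$, handle $\psi_2$ via Theorem~\ref{poisson:thm}, handle $\psi_1$ via the lower-order kernel $\Gamma_{s-1}$ and then promote $(s-1)$-harmonicity to $s$-harmonicity through Lemma~\ref{eq:lem}, and finally correct the nonzero $D^{s-1}$-trace using Theorem~\ref{Eden1:thm}. Two minor points. First, a sign slip: the paper shows $D^{s-1}\big(u_1+\psi_1\chi_{\R^N\backslash B}\big)(z)=+2\gamma_{N,s-1}\int_{\R^N\setminus\overline B}\frac{\psi_1(y)-\psi_1(z)}{|z-y|^N(|y|^2-1)^{s-1}}\,dy$, not $-2\gamma_{N,s-1}$; the minus sign in the statement of the theorem comes from the subtraction of $u_E$. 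Second, for the trace computation itself the paper does not go through the $G_s$--$E_{s-1}$ relation you mention; instead it uses the normalization $\int_{\R^N\setminus\overline B}\Gamma_{s-1}(x,y)\,dy=1$ to subtract $\psi_1(z)$, and then invokes the elementary identity $D^{s-1}f(z)=2\lim_{x\to z}\delta(x)^{1-s}f(x)$ (valid when $f=0$ off $\overline B$ and $D^{s-2}f=0$; this is \cite[Lemma~2.1]{AJS17a}), which reads off the trace directly from the explicit $\delta^{s-1}$-prefactor in $\Gamma_{s-1}$. This is shorter and less error-prone than routing through \eqref{GsDsG} and the $E_{s-1}$ formula.
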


We remark that, if $\psi\in \cL^1_{s-1}$, then the claim in Theorem \ref{xtra:thm} holds with $\psi_1\equiv \psi$ and $\psi_2\equiv 0$.  Property \eqref{hyp} is needed to use Theorem \ref{Eden1:thm} and its verification usually involves long computations but it can be easily verified in some simple situations, for example, if $\psi=c$ in $B_r$ for some $r>1$ and $c\in\R$, since in this case
\begin{align*}
 \int_{\mathbb R^N\backslash \overline{B}}\frac{\psi_1(y)-\psi_1(z)}{|z-y|^N(|y|^2-1)^{s-1}}\ dy
 =\int_{\mathbb R^N\backslash \overline{B_r}}\frac{\psi_1(y)-\psi_1(z)}{|z-y|^N(|y|^2-1)^{s-1}}\ dy
\end{align*}
and $|z-y|>r$ for all $z\in\partial B$ and $y\in\mathbb R^N\backslash \overline{B_r}$.  Observe also that, if $N=1$, then $\partial B=\{-1,1\}$ and \eqref{hyp} holds provided the integral is finite, which can be shown for any
\begin{align*}
\psi\in \cL^1_s\cap C^{s-1+\alpha,0}((-r,r))\qquad \text{ for some $r>1$ and $\alpha\in(0,1)$.} 
\end{align*}

\begin{proof}[Proof of Theorem \ref{xtra:thm}]
 Let $\psi,\psi_1,\psi_2,$ $r$, and $u$ be as in the statement. By \cite[Theorem 1.1]{AJS17a} we know that $v:\R^N\to\R$ given by
 \begin{align*}
  v(x)\ =\int_{\R^N\setminus\overline{B}}\Gamma_{s-1}(x,y)\psi_1(y)\;dy+\psi_1(x)\chi_{\R^N\backslash B}
 \end{align*}
belongs to $C^\infty(B)\cap \cL^1_{s-1}\cap C^{s-1}(B_r)$ and solves pointwisely $(-\Delta)^{s-1}v(x)=0$ in $B$.  Then, by Lemma \ref{eq:lem}, we also have that 
\begin{align}\label{e12}
\text{$(-\Delta)^{s}v(x)=(-\Delta)(-\Delta)^{s-1}v(x)=0$ in $B$. }
\end{align}
For $z\in\partial B$ let 
\begin{align}\label{long:comp}
{\varphi(z)}:={\gamma_{N,s-1}}\int_{\mathbb R^N\backslash\overline{B}}\frac{\psi_1(y)-\psi_1(z)}{(|y|^2-1)^{s-1}|z-y|^N}\ dy.
\end{align}
By \eqref{hyp} we have that $\varphi\in C(\partial B)$.  By Theorems \ref{Eden1:thm}, \ref{poisson:thm} (see also \cite[Theorem 1.1]{AJS17a}), and by \eqref{e12} it follows that $u\in C^{\infty}(B)\cap C^{s-1}(\overline{B})\cap \cL^1_s$ is a pointwise solution of $(-\Delta)^s u=0$ in $B$.  It remains to verify the boundary conditions. By definition, $u=\psi$ in $\R^N\backslash B$.  Moreover, since $u$ is bounded in $B_r$ for some $r>1$ we have that $\dn^{s-2} u=0$ on $\partial B.$ On the other hand, fix $z\in\partial B$ and recall that
\begin{align*}
 D^{s-1}u(z):=-\lim_{\substack {x\to z\\ x\in B}}\frac{\partial}{\partial|x|} \Big[\delta(x)^{2-s}\Big(u(x)
 -\lim_{\substack {y\to z\\ y\in \R^N\backslash B}}u(y)\Big)\Big].
\end{align*}
Using that the nonlocal Poisson kernel is normalized ($\int_{\mathbb R^N\backslash\overline{B}}\Gamma_{s-1}(\cdot,y)\ dy=1$ in $B$) we have that
\begin{align}\label{norm}
\psi_1(z)= \psi_1(z)\int_{\mathbb R^N\backslash\overline{B}}\Gamma_{s-1}(x,y)\ dy ={\gamma_{N,s-1}}\int_{\mathbb R^N\backslash\overline{B}}\frac{\delta(x)^{s-1}\psi_1(z)}{(|y|^2-1)^{s-1}|z-y|^N}\ dy\qquad \text{ for }x\in B.
\end{align}
Moreover, by \cite[Lemma 2.1]{AJS17a} we know that, if a function $f$ satisfies that $f=0$ in $\R^N\backslash \overline{B}$ and $D^{s-2}f=0$ on $\partial B$, then 
\begin{align}\label{p1}
 D^{s-1}f(z)=2\lim_{x\to z}\frac{f(x)}{\delta(x)^{s-1}}\qquad \text{ for }z\in\partial B.
\end{align}
For $x\in\R^N$, let $F(x):=\int_{\R^N\backslash\overline{B}}\Gamma_{s-1}(x,y) \psi_1(y)\ dy+\psi_1(x)\chi_{\R^N\backslash B}(x)$.  Observe that 
\begin{align*}
 \lim_{\substack {y\to z\\ y\in \R^N\backslash B}}F(y) 
 =\lim_{\substack {y\to z\\ y\in \R^N\backslash B}}\psi_1(y)\chi_{\R^N\backslash B}(y)
 = \psi_1(z)\qquad \text{ for }z\in\partial B
\end{align*}
and, by \eqref{norm}
\begin{align}\label{p2}
 F(x)-\lim_{\substack {w\to z\\ w\in \R^N\backslash B}}F(w)=
 {\gamma_{N,s-1}}\int_{\mathbb R^N\backslash\overline{B}}\frac{\delta(x)^{s-1}(\psi_1(y)-\psi_1(z))}{(|y|^2-1)^{s-1}|x-y|^N}\ dy\qquad \text{ for }x\in B.
\end{align}
Then, using \eqref{hyp}, \eqref{p1}, and \eqref{p2},
\begin{align*}
D^{s-1} F(z)=2{\gamma_{N,s-1}}\int_{\mathbb R^N\backslash\overline{B}}\frac{\psi_1(y)-\psi_1(z)}{(|y|^2-1)^{s-1}|z-y|^N}\ dy=2\varphi(z)\qquad \text{ for }z\in\partial B.
\end{align*}
But then, since 
\begin{align*}
&D^{s-1} \Bigg(\int_{\R^N\setminus\overline{B}}\Gamma_{s}(\cdot,y)\psi_2(y)\;dy+\psi_2\Bigg)=0\qquad \text{ on }\partial B,\\
&D^{s-1} \Bigg(-\int_{\partial B}E_{s-1}(\cdot,w){\varphi(w)}\ dw\Bigg)=-\varphi\qquad \text{ on }\partial B,
\end{align*}
by Theorems \ref{Eden1:thm} and \ref{poisson:thm}, we obtain that 
\begin{align*}
 D^{s-1} u
 =D^{s-1}\Bigg(\int_{\R^N\setminus\overline{B}}&\Gamma_{s}(\cdot,y)\psi_2(y)\;dy+F-2\int_{\partial B}E_{s-1}(\cdot,z)\varphi(y)\Bigg)
 =0\qquad \text{ on }\partial B,
\end{align*}
as claimed.
\end{proof}

\subsection{The Dirichlet boundary value problem in balls and representation formulas}

We can now put together all the previous results to obtain the following.

\begin{thm}[Theorem 1.4 in \cite{AJS17a} and Theorem 1.6 in \cite{AJS17b}]\label{main:thm}
Let $\alpha\in(0,1]$ such that $2s+\alpha\notin \N$, $g_k\in C^{1-k,0}(\partial B)$ for $k=0,1$, $f\in C^\alpha(\overline{B})$, $h\in \cL^1_{s}$ such that $h=0$ in $B_{r}$, $r>1$, and $u:\R^N\to\R$ be given by $u(x)=h(x)$ for $x\in\R^N\backslash\overline{B}$ and 
\begin{align*}
u(x)=\int_B G_s(x,y)f(y)\ dy+\int_{\R^N\backslash\overline{B}}\Gamma_s(x,y)h(y)\ dy+
\sum_{k=0}^{1}\ \int_{\partial B}E_{s-2+k}(x,\theta)\ g_k(\theta)\ d\theta\qquad \text{for $x\in B$}.
\end{align*}
Then, $u\in C^{2s+\alpha}(B)$, $\delta^{2-s}u\in C^{1,0}(\overline{B})$ and
\begin{align*}
(-\Delta)^s u=f\ \ \text{ in } B,\quad u=h\ \ \text{ on } \mathbb R^N\backslash\overline{B},\quad \dn^{s-2} u= g_0\ \ \text{ on } \partial B,\quad \dn^{s-1} u= g_1\ \ \text{ on } \partial B.
\end{align*}
\end{thm}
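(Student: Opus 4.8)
The plan is to assemble the solution by superposition, treating each of the four data objects $f$, $h$, $g_0$, $g_1$ separately and invoking the corresponding representation theorem already established, then checking that the contributions do not interfere with one another's boundary conditions. Concretely, write $u = u_f + u_h + u_{g_0} + u_{g_1}$ where $u_f(x)=\int_B G_s(x,y)f(y)\,dy$, $u_h(x)=\int_{\R^N\backslash\overline{B}}\Gamma_s(x,y)h(y)\,dy+h(x)\chi_{\R^N\backslash B}$, and $u_{g_k}(x)=\int_{\partial B}E_{s-2+k}(x,\theta)g_k(\theta)\,d\theta$ for $k=0,1$. By Theorem \ref{green:thm}, $u_f\in C^{2s+\alpha}(B)\cap C^s_0(B)$ solves $(-\Delta)^s u_f=f$ in $B$ with $u_f=0$ on $\R^N\backslash B$ and $D^{s-2}u_f=D^{s-1}u_f=0$ on $\partial B$. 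By Theorem \ref{poisson:thm}, $u_h\in C^\infty(B)$ solves $(-\Delta)^s u_h=0$ in $B$ with $u_h=h$ on $\R^N\backslash\overline{B}$ (recall $h=0$ in $B_r$ so the hypothesis of that theorem applies) and $D^{s-2}u_h=D^{s-1}u_h=0$ on $\partial B$. By the two Edenhofer-kernel theorems (both labelled ``Theorem 1.4 in \cite{AJS17a}'' in the excerpt), $u_{g_0}\in C^\infty(B)$ with $\delta^{2-s}u_{g_0}\in C^{1,0}(\overline{B})$ satisfies $(-\Delta)^s u_{g_0}=0$ in $B$, $u_{g_0}=0$ on $\R^N\backslash\overline{B}$, $D^{s-2}u_{g_0}=g_0$, $D^{s-1}u_{g_0}=0$ on $\partial B$; and $u_{g_1}\in C^\infty(B)$ with $\delta^{1-s}u_{g_1}\in C(\overline{B})$ satisfies $(-\Delta)^s u_{g_1}=0$ in $B$, $u_{g_1}=0$ on $\R^N\backslash\overline{B}$, $D^{s-2}u_{g_1}=0$, $D^{s-1}u_{g_1}=g_1$ on $\partial B$.

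The next step is to add these up using linearity of $(-\Delta)^s$ in the pointwise sense \eqref{Ds} (valid since each summand lies in $\cL^1_s$ and is $C^{2s+\alpha}$ near any interior point of $B$, noting $u_f$ supplies the only nonzero right-hand side) and linearity of the trace operators $D^{s-2}$, $D^{s-1}$ from \eqref{t:def}. Here one must be slightly careful: the traces \eqref{t:def} are defined with an outer limit $\lim_{y\to z,\,y\notin B}u(y)$, so linearity of the trace on a sum requires that each summand individually have a well-defined outer boundary limit. Only $u_h$ contributes outside $B$, and there $u=h$ with $h\equiv 0$ near $\partial B$ (since $h=0$ in $B_r$, $r>1$), so every outer limit vanishes and the traces reduce to the simpler form \eqref{red}; linearity is then immediate. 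Summing the boundary data: $u=0+h+0+0=h$ on $\R^N\backslash\overline{B}$; $D^{s-2}u=0+0+g_0+0=g_0$ and $D^{s-1}u=0+0+0+g_1=g_1$ on $\partial B$; and $(-\Delta)^s u=f+0+0+0=f$ in $B$.

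For the regularity claims, $u\in C^{2s+\alpha}(B)$ because $u_f\in C^{2s+\alpha}(B)$ while $u_h,u_{g_0},u_{g_1}\in C^\infty(B)$; and $\delta^{2-s}u\in C^{1,0}(\overline{B})$ because $\delta^{2-s}u_f=\delta^{2-s}(O(\delta^s))=O(\delta^2)\to 0$ with matching $C^1$ bounds near $\partial B$ (from $\|\operatorname{dist}(\cdot,\partial B)^{-s}u_f\|_{L^\infty}<\infty$ in Theorem \ref{green:thm}), $\delta^{2-s}u_h$ is smooth up to $\overline{B}$ since $u_h\in C^\infty(B)$ is bounded with bounded derivatives near $\partial B$, $\delta^{2-s}u_{g_0}\in C^{1,0}(\overline{B})$ by hypothesis, and $\delta^{2-s}u_{g_1}=\delta\cdot(\delta^{1-s}u_{g_1})\in C^{1,0}(\overline{B})$ since $\delta^{1-s}u_{g_1}\in C(\overline{B})$ and $\delta\in C^\infty(\overline{B})$ vanishes to first order—one should check the product is genuinely $C^1$, which follows because $u_{g_1}$ is smooth in the interior and the weight controls the boundary behavior. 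The main obstacle, and the only point demanding real care rather than bookkeeping, is the verification that the trace operators behave additively despite the outer-limit term in \eqref{t:def}; once it is observed that the only out-of-ball contribution comes from $h$ which vanishes near $\partial B$, everything collapses to the homogeneous-exterior case \eqref{red} and the argument is a clean superposition of the cited theorems.
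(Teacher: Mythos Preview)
Your approach is essentially the same as the paper's: the paper does not give a detailed proof of this theorem but simply states that one ``put[s] together all the previous results,'' and your superposition $u=u_f+u_h+u_{g_0}+u_{g_1}$ together with the four cited theorems is exactly that assembly. Your attention to the linearity of the trace operators $D^{s-2}$, $D^{s-1}$ (observing that $h\equiv 0$ near $\partial B$ forces all outer limits to vanish, reducing \eqref{t:def} to \eqref{red}) is a correct and useful clarification.

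One point deserves more care than you give it: the claim $\delta^{2-s}u_{g_1}\in C^{1,0}(\overline{B})$. Writing $\delta^{2-s}u_{g_1}=\delta\cdot(\delta^{1-s}u_{g_1})$ with the second factor merely in $C(\overline{B})$ does \emph{not} by itself yield $C^{1}$-regularity of the product; a smooth function vanishing to first order times a continuous function need not be $C^1$. The correct argument uses the explicit structure of $E_{s-1}$: since $E_{s-1}(x,\theta)=\tfrac{1}{2\omega_N}\delta(x)^s|x-\theta|^{-N}$, one has $\delta(x)^{2-s}u_{g_1}(x)=\tfrac{1}{2\omega_N}\delta(x)^{2}\int_{\partial B}|x-\theta|^{-N}g_1(\theta)\,d\theta$, and the factor $\delta(x)\int_{\partial B}|x-\theta|^{-N}g_1(\theta)\,d\theta$ is (up to a constant) the classical harmonic Poisson extension of $g_1$, which is smooth in $B$ and continuous on $\overline{B}$; multiplying by the additional smooth factor $\delta(x)$ that vanishes on $\partial B$ then gives the required $C^{1,0}(\overline{B})$-regularity because the gradient of $\delta$ times a bounded harmonic function plus $\delta$ times the (possibly unbounded) gradient of the harmonic extension is controlled by standard interior gradient estimates for harmonic functions scaled by the distance to the boundary. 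This is the only place where your bookkeeping argument needs a genuine analytic input beyond what the cited theorems already supply.
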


This solution given by Theorem \ref{main:thm} is, in fact, unique. As a consequence, we have the following representation formula.
\begin{thm}[Theorem 1.5 in \cite{AJS17a}]\label{uniqueness:thm}
Let $\alpha\in(0,1)$ such that $2s+\alpha\not\in \N$, $r>1$, $u\in \cL_{s}^1\cap C^{2s+\alpha}(B)$ be such that 
\begin{align*}
\delta^{2-s} u \in C^{1+\alpha}(\overline{B}),\qquad (-\Delta)^s u\in C^\alpha(\overline{B}),\qquad \text{ and }\qquad u=0\ \ \text{ in }B_{r}\backslash\overline{B}.
\end{align*}
Then, for $x\in B$,
\begin{align*}
u(x)=\int_B G_s(x,y)(-\Delta)^s u(y)\ dy + \int_{\R^N\backslash\overline{B}}\Gamma_s(x,y)u(y)\ dy+ \sum_{k=0}^{1}\ \int_{\partial B}E_{s-2+k}(x,\theta)\dn^{k+s-2} u(\theta)\ d\theta.
\end{align*}
\end{thm}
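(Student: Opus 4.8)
The plan is to reduce the representation formula to the \emph{uniqueness} of the Dirichlet problem already solved in Theorem~\ref{main:thm}. Given $u$ as in the hypotheses, put $f:=(-\Delta)^s u\in C^\alpha(\overline{B})$, $g_k:=D^{k+s-2}u$ on $\partial B$ for $k=0,1$, and $h:=u$ on $\R^N\setminus\overline{B}$. Because $u=0$ on $B_r\setminus\overline{B}$, the outer limits in the trace operators \eqref{t:def} vanish and the traces reduce to \eqref{red}, so $g_0=(\delta^{2-s}u)|_{\partial B}$ and $g_1=-\partial_\nu(\delta^{2-s}u)|_{\partial B}$; since $\delta^{2-s}u\in C^{1+\alpha}(\overline{B})$ this gives $g_0\in C^{1,0}(\partial B)$ and $g_1\in C^{0,0}(\partial B)$, while $h\in\cL^1_s$ and $h=0$ on $B_r$ with $r>1$. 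Hence $(f,h,g_0,g_1)$ satisfies the hypotheses of Theorem~\ref{main:thm}, which produces a function $w:\R^N\to\R$ --- precisely the right-hand side of the claimed identity --- with $w\in C^{2s+\alpha}(B)$, $\delta^{2-s}w\in C^{1,0}(\overline{B})$, $(-\Delta)^s w=f$ in $B$, $w=h$ on $\R^N\setminus\overline{B}$, and $D^{s-2}w=g_0$, $D^{s-1}w=g_1$ on $\partial B$.

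The second step is to set $v:=u-w$ and prove $v\equiv 0$. Both $u$ and $w$ belong to $\cL^1_s\cap C^{2s+\alpha}(B)$ (for $w$ one uses $\delta^{2-s}w\in C^{1,0}(\overline{B})$, so $|w|\le C\delta^{s-2}\in L^1(B)$, together with $h\in\cL^1_s$), so the pointwise evaluation \eqref{Ds} is linear on this class and $(-\Delta)^s v=f-f=0$ in $B$. Moreover $v=u-w$ vanishes on $\R^N\setminus\overline{B}$, so the traces of $v$ are again given by the linear formulas \eqref{red}; thus $D^{s-2}v=D^{s-1}v=0$ on $\partial B$, i.e.\ $\delta^{2-s}v\in C^{1,0}(\overline{B})$ vanishes on $\partial B$ together with its normal derivative (recall from Remark~\ref{rem} that $\partial/\partial|x|$ is the normal derivative at $\partial B$), whence $v=o(\delta^{s-1})$ near $\partial B$ --- the singular boundary profiles $\delta^{s-2}$ and $\delta^{s-1}$ have been annihilated. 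It remains to show that a solution $v\in\cL^1_s\cap C^{2s+\alpha}(B)$ of $(-\Delta)^s v=0$ in $B$ with $v=0$ on $\R^N\setminus\overline{B}$ and $D^{s-2}v=D^{s-1}v=0$ on $\partial B$ must be identically zero.

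For this last step I would invoke the boundary regularity theory for higher-order fractional Laplacians (the $\mu$-transmission calculus, cf.\ \cite{G15:2,G15:3}): solutions of $(-\Delta)^s v=0$ in $B$ with exterior datum $0$ have a boundary behavior whose leading modes are spanned by $\delta^{s-2}$, $\delta^{s-1}$ and $\delta^s$ (the first two are exactly the ranges of the Edenhofer kernels $E_{s-2}$, $E_{s-1}$, and $D^{s-2}$, $D^{s-1}$ read off their coefficients), so that $D^{s-2}v=D^{s-1}v=0$ forces the distinguished behavior $v\in C^s_0(B)\subset\cH^s_0(B)$. Once $v\in\cH^s_0(B)\cap C^{2s+\alpha}(B)\cap\cL^1_{s-1}$, Lemma~\ref{ibyp} gives $\cE_s(v,\varphi)=\int_B\varphi\,(-\Delta)^s v\,dx=0$ for every $\varphi\in C^\infty_c(B)$, and by density of $C^\infty_c(B)$ in $\cH^s_0(B)$ and continuity of $\cE_s$ this extends to all $\varphi\in\cH^s_0(B)$; hence $v$ is a weak solution of $(-\Delta)^s v=0$ in $B$ in the sense of \eqref{wsol:def}, and Theorem~\ref{riesz:thm} gives $v\equiv 0$, i.e.\ $u=w$. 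I expect the boundary-regularity input to be the main obstacle: converting the soft decay $v=o(\delta^{s-1})$ coming from the vanishing traces into genuine $\cH^s_0(B)$-regularity is exactly where the fine elliptic theory for $(-\Delta)^s$ is needed in place of a purely variational argument, and it is the reason the statement is confined to the ball, where the explicit kernels $G_s$, $\Gamma_s$, $E_{s-2+k}$ make the relevant solution class transparent. One could instead try to prove the identity directly as a bilinear nonlocal Green identity obtained by testing against $G_s(x,\cdot)$ and using \eqref{GsDsG}, but isolating the resulting boundary terms --- which are precisely $\int_{\partial B}E_{s-2+k}(x,\theta)\,D^{k+s-2}u(\theta)\,d\theta$ --- is no easier than the argument above.
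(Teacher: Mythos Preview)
The paper does not prove this statement: Theorem~\ref{uniqueness:thm} is only quoted here, with the proof deferred to \cite[Theorem~1.5]{AJS17a}. So there is no in-paper argument to compare against.

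That said, your strategy is the natural one and matches how such representation formulas are typically established: build the candidate $w$ from the explicit kernels via Theorem~\ref{main:thm}, then show the difference $v=u-w$ vanishes. Your reading of the data $(f,h,g_0,g_1)$ from $u$ is correct, and the linearity/trace bookkeeping for $v$ is fine (note $v$ is compactly supported in $\overline{B}$, so $v\in\cL^1_{s-1}$ automatically once it is in $L^1$).

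The only real gap you already identify yourself: passing from ``$\delta^{2-s}v\in C^{1,0}(\overline{B})$ with vanishing value and normal derivative'' to ``$v\in C^s_0(B)\subset\cH^s_0(B)$'' is not an immediate consequence of $v=o(\delta^{s-1})$; one needs that an $s$-harmonic function with this decay actually inherits the sharp $\delta^s$-behavior. This is indeed where the $\mu$-transmission machinery of \cite{G15:2,G15:3} (or the explicit Martin-kernel analysis in \cite{AJS17a}) enters, and in the original proof the uniqueness step is carried out precisely at that level rather than by a soft variational argument. Your alternative suggestion---deriving the identity directly as a ``nonlocal Green formula'' by testing against $G_s(x,\cdot)$ and extracting the boundary pieces via \eqref{GsDsG} and the relations between $G_s$ and $E_{s-2+k}$---is in fact closer in spirit to what is done in \cite{AJS17a}, though as you note the bookkeeping of the boundary terms is delicate either way.
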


For a uniqueness statement in the case where $u$ is nonzero in $B_r\backslash \overline{B}$, we refer to \cite[Theorem 1.6]{AJS17a}.

\begin{remark}\label{sb2}
As mentioned in the introduction, similar results as those presented in this survey hold for $s>2$; however, one can also find surprising differences. To mention a few, let  $s=m+\sigma>2$ with $m\in \N$ and $\sigma\in(0,1)$.
\begin{enumerate}
\item A pointwise evaluation of $(-\Delta)^s u(x)$ can be calculated in terms of finite differences of order $2(m+1)$, namely,
\begin{align*}
	(-\Delta)^s u(x):=\frac{c_{N,\m+1,s}}{2}\int_{\R^N} \frac{\delta_{\m+1} u(x,y)}{|y|^{N+2s}} \ dy,
\end{align*}
where $\delta_{\m+1} u(x,y):= \sum_{k=-{\m+1}}^{\m+1} (-1)^k { \binom{2(m+1)}{m+1-k}} u(x+ky)$ and $c_{N,m+1,s}>0$ is a suitable normalization constant (see \cite[Equation (1.2)]{ADFJS18}).  In particular, direct calculations for the case $s>2$ are harder and they often require fine combinatorial identities.
\item The nonlocal Poisson kernel $\Gamma_s$ given in \eqref{PK} has the same formula for $s=m+\sigma>2$, namely, 
\begin{equation*}
	\Gamma_s(x,y)\ :=\ (-1)^m\frac{\gamma_{N,\sigma}}{{|x-y|}^N}\frac{(1-|x|^2)_+^s}{(|y|^2-1)^s}.
\end{equation*}
Observe that the Poisson kernel is \emph{positive} if $m$ is even and \emph{negative} if $m$ is odd.  As a consequence, the counterexample from Theorem \ref{main:thm:point} can be adjusted to show that $(-\Delta)^s$ does not have a general positivity preserving property if $s\in(m,m+1)$ with $m$ odd.  Remarkably, if $s\in(m,m+1)$ with $m$ even, this counterexample does not work anymore, and in fact, one can show that the Green's function for two disjoint balls is positive! (see \cite[Theorem 1.10]{AJS17a}).  Although maximum principles are not expected to hold for any $s>1$, a counterexample for $s\in(m,m+1)$ with $m\geq 2$ even, is still missing. 
\item  Problems with a nonlocal boundary condition $u=\psi$ in $\R^N\backslash B$ with $\psi\neq 0$ close to the boundary $\partial B$ are more intricate to characterize if $s>2$, and a result such as Theorem \ref{xtra:thm} is not yet available if $s>2$.
\item Similarly as in Section \ref{poisson:subsec}, for $s=m+\sigma>2$ one has $m+1$ boundary Poisson kernels.  Together with the nonlocal boundary condition (b.c.), we have that linear problems involving $(-\Delta)^s$ require $m+2$ b.c. to be well-posed. Note that $(-\Delta)^m$ and $(-\Delta)^{m+1}$ need only $m$ and $m+1$ b.c. respectively to be well-posed. The ``extra'' boundary condition is due to the nonlocality of $(-\Delta)^s$. 
\end{enumerate}
\end{remark}

\section{Asymptotic behavior of solutions}\label{asym:sec}

The convergence of the kernels $G_s$, $E_{s-2}$, $E_{s-1}$, and the corresponding solutions as $s\to 2^-$ is well behaved, in the sense that the (pointwise) limits exist and the resulting function is also a solution.  This can be easily verified (for suitable data) in virtue of Theorem \ref{main:thm} (see also \cite[Theorem 1.4]{AJS17a}) and the dominated convergence theorem.

Nevertheless, the limit as $s\to 1^+$ is more delicate and may not always yield a meaningful solution. We show this with a simple example: let $N=1$, $\sigma\in(0,1)$, and $s=1+\sigma$; then
\begin{align*}
 u_s(x):=\int_{\partial B} E_{s-2}(x,y)\ dy=E_{s-2}(x,-1)+E_{s-2}(x,1)=(1-x^2)^{s-2}=(1-x^2)^{\sigma-1}
\end{align*}
is a solution (by Theorem \ref{main:thm}) of $(-\Delta)^s u_s = 0$ in $B$ satisfying $D^{\sigma-1} u_s(z) = 1$ and $D^{\sigma} u_s(z) = 0$ for $z\in\partial B$.
If $\sigma\to 0$ (\emph{i.e.}, if $s\to 1^+$), then $u_s(x)\to (1-x^2)^{-1}$, which is \emph{not} harmonic in $B$. Note that $u_1\not\in L^1(B)$ and that the extra boundary condition ($D^{\sigma-1} u_s(z) = 1$) required in the higher-order case ($s\in(1,2)$) is \emph{incompatible} with problems of lower order ($s=1$).  In conclusion, there are sequences of $s$-harmonic functions which converge pointwisely to a function which is not harmonic; in other words, $s$-harmonicity is not a property that is always preserved in the limit without additional assumptions.

 On the other hand, note that if $\sigma\to 1^-$ (\emph{i.e.}, if $s\to 2^-$), then $u_s(x)\to 1$ pointwisely for $x\in B$, which is, in fact, a solution of 
 \begin{align*}
 (-\Delta)^2 u_2 = 0\quad \text{ in $B$},\qquad D^{0} u_2(z) = u_2(z) = 1,\ \ D^{1} u_2(z) = \frac{1}{2}\partial_\nu 1 = 0\quad \text{ for }z\in\partial B.
 \end{align*}

Regarding the kernel $\Gamma_s$, observe that the solution $u$ given by \eqref{u:def} goes uniformly to 0 as $s$ approaches 1 or 2, due to the constant $\gamma_{N,s-1}$ given in \eqref{c} and the assumption that $\psi=0$ in $B_r$ with $r>1$. Finally, for the convergence of the Poisson kernel $\Gamma_\sigma$ to the Poisson kernel for the Laplacian as $\sigma\to 1^-$ see \cite[footnote on page 121]{L72}.

\appendix

\section{On the composition of Green functions}
\label{A}

Let $G_t$ be given by Boggio's formula \eqref{green}.  In this appendix we show that the functions $u,v,w:\R^N\to\R$ given by
\begin{align}
u(x)&:=\ \int_{B} G_{s}(x,y)\ dy,\nonumber\\
v(x)&:=\ \int_{B}G_{s-1}(x,z)\int_{B}G_{1}(z,y)\ dy\ dz,\nonumber\\
w(x)&:=\ \int_{B}G_1(x,z)\int_{B}G_{s-1}(z,y)\ dy\ dz.\label{w:def}
\end{align}
solve each a different problem.  In particular, this also illustrates (see \eqref{not} below) the fact that the operators $(-\Delta)$ and $(-\Delta)^{s-1}$ can only be interchanged whenever the involved functions are well defined, and this can be a very subtle issue. 

To fix ideas, we focus on the case $s=\frac{3}{2}$ and $N=1$.  A first important difference between $u$, $v$, and $w$ comes from their optimal regularity. Indeed, by the regularity properties of Boggio's formula (see Theorem \ref{green:thm} or \cite{AJS17a}), we know that
\begin{align*}
u\in C^\infty(B) \cap C_0^s(\overline{B}),\qquad 
v\in C^\infty(B) \cap C_0^{\frac{1}{2}}(\overline{B}),\qquad 
\text{ and }\qquad 
w\in C^\infty(B)\cap C^{1}_0(B).
\end{align*}
In particular, we see that $(-\Delta)^s$ can always be computed in $B$.  We now argue that $u$, $v$, and $w$ solve in fact very different problems. First, we note that, by Theorem \ref{green:thm}, $u$ is the unique solution of \eqref{mp} with $f=1$ in $B$. Next, by construction, we see that $(-\Delta)^s v = (-\Delta)(-\Delta)^{\frac{1}{2}} v=1$ in $(-1,1)$ satisfying the boundary conditions
\begin{align*}
v=0\quad \text{ in }\R\backslash \overline{B}\qquad \text{ and }\qquad \lim_{x\to z}(-\Delta)^\frac{1}{2}v(x)=0\quad \text{ for }z\in \partial B. 
\end{align*}
Using a (fractional) Hopf Lemma (see \cite[Corollary 1.9]{AJS17a}), one can show that $D^{s-1}v>0$ on $\partial B$ and therefore $v$ does not satisfy the boundary conditions in \eqref{mp}.  We now turn our attention to the function $w$ given by \eqref{w:def}.  By \cite[Table 1 and Table 2]{D12}, we know that
\begin{align}\label{dyda}
 (-\Delta)^\frac{1}{2}\delta(x)^\frac{1}{2}=1\quad \text{ in }B\qquad \text{ and }\qquad 
 (-\Delta)^\frac{1}{2}(x\delta(x)^\frac{1}{2})=2x\quad \text{ in }B.
\end{align}
Then, 
\begin{align*}
 w_1:\R\to \R\quad \text{given by}\quad w_1(x)=\int_B G_\frac{1}{2}(x,y)\ dy
\end{align*}
is a solution of 
\begin{align*}
 (-\Delta)^\frac{1}{2} w_1 = 1\quad \text{ in }B,\qquad w_1=0\quad \text{ in }\R\backslash B
\end{align*}
and, by uniqueness (Theorem \ref{riesz:thm}) and \eqref{dyda},
\begin{align}\label{w1:def}
w_1(x)=\int_B G_\frac{1}{2}(x,y)\ dy=\delta(x)^\frac{1}{2}\qquad \text{ for }x\in\R. 
\end{align}
Since $G_1$ is the Dirichlet Green function of $(-\Delta)$ in $B=(-1,1)$, we have that $w$ given by \eqref{w:def} is a solution of 
\begin{align*}
 (-\Delta)w=w_1=\delta^\frac{1}{2}\quad \text{ in }B,\qquad w=0\quad \text{ in }\R\backslash B
\end{align*}
and $w$ satisfies
\begin{align*}
 w=0\quad \text{ in }\R\backslash \overline{B},\qquad \lim_{x\to z}(-\Delta)w(x)=0\quad \text{ for }z\in \partial B.
\end{align*}
Moreover, by Lemma \ref{eq:lem}, we have that $(-\Delta)^{\frac{3}{2}}w(x)=-\Delta(-\Delta)^{\frac{1}{2}}w(x)$ for $x\in B$.  If we na\"{\i}vely interchange the Laplacians, we would have that
\begin{align}\label{not}
(-\Delta)^{\frac{3}{2}}w(x)=-\Delta(-\Delta)^{\frac{1}{2}}w(x)=(-\Delta)^{\frac{1}{2}}(-\Delta)w(x)=(-\Delta)^{\frac{1}{2}}w_1(x)=1\qquad \text{ for }x\in B. 
\end{align}
However, such an interchange is not possible, because $\Delta w$ is not well defined on $\partial B$.\footnote{As can be seen in the proof of Lemma \ref{la}, $w$ is not twice weakly differentiable on $\partial B$ because its derivative $w'$ has a jump discontinuity at $\partial B$. Since $(-\Delta)^\frac{1}{2}$ is a nonlocal operator, the function $\Delta w$ must be well defined in $\R^N$.} In fact, we can show the following result.

\begin{lemma}\label{la}
 If $w$ is given by \eqref{w:def}, then $(-\Delta)^{\frac{3}{2}} w \neq 1$ in $B=(-1,1)$.
\end{lemma}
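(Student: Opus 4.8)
The plan is to compute $w$ explicitly in dimension one and to show that the function $(-\Delta)^{\frac12}w$ is not twice differentiable at the boundary points $\pm 1$, so that applying $-\Delta$ to it pointwise cannot produce the constant $1$ throughout $B$. The key observation is that $w$ solves $-\Delta w = \delta^{\frac12}$ in $B=(-1,1)$ with $w=0$ outside, so that on $B$ we have $w''(x) = -\delta(x)^{\frac12} = -(1-x^2)^{\frac12}$; integrating twice and using $w(\pm1)=0$ gives an explicit closed-form expression for $w$ on $[-1,1]$ (a polynomial-plus-$\arcsin$ type primitive of $(1-x^2)^{1/2}$), together with $w\equiv 0$ on $\R\setminus[-1,1]$. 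From this one sees directly that $w'$ has a jump at $x=\pm1$ (the one-sided interior derivative $w'(1^-)\neq 0$ while $w'(1^+)=0$), so $w$ is exactly $C^1_0(B)$ and not $C^2$ up to the boundary, confirming the regularity claim already recorded in the appendix.

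Next I would analyze $(-\Delta)^{\frac12}w$. Since $w\in C^\infty(B)\cap C^1_0(\overline B)\subset \cH^{\frac12}_0(B)\subset \cL^1_{1/2}$, the hypersingular integral $(-\Delta)^{\frac12}w(x)$ is well defined for $x\in B$ (indeed $w$ is smooth in $B$), and it is a smooth function of $x$ in the open interval $(-1,1)$; moreover it is continuous up to $\partial B$ with $(-\Delta)^{\frac12}w=0$ on $\partial B$ because $D^{s-2}$-type traces vanish (or one computes the limit directly from the explicit $w$). The decisive point is the behavior near $x=\pm1$: because $w'$ has a genuine corner at the boundary, standard results on the boundary regularity of $(-\Delta)^{\sigma}$ applied to a $C^1$ function with a corner show that $(-\Delta)^{\frac12}w$ fails to be $C^2$ (in fact fails to be twice differentiable) at $x=\pm 1$ — typically one gets a $\delta(x)^{1/2}$-type or $\log$-type contribution to the second derivative from the jump in $w'$. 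Heuristically, if $(-\Delta)^{\frac12}w$ \emph{were} $C^2(\overline B)$, then Lemma \ref{eq:lem} would let us write $(-\Delta)^{\frac32}w=-\Delta(-\Delta)^{\frac12}w=(-\Delta)^{\frac12}(-\Delta)w=(-\Delta)^{\frac12}w_1=(-\Delta)^{\frac12}\delta^{\frac12}=1$ by \eqref{dyda}; so the failure of $C^2$-regularity at $\pm1$ is precisely the obstruction, and to get the strict inequality I would exhibit the leading singular term of $(-\Delta)^{\frac12}w$ near $x=1$ and show $\Delta\big[(-\Delta)^{\frac12}w\big]$ blows up (or at least is not equal to $-1$) as $x\to 1^-$, whence $(-\Delta)^{\frac32}w(x)=-\Delta(-\Delta)^{\frac12}w(x)\not\equiv 1$ on $B$.

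Concretely, the cleanest route is: (i) use \eqref{w1:def}, i.e. $w_1=\delta^{\frac12}$, and the known value $(-\Delta)^{\frac12}\delta^{\frac12}=1$ to pin down what the "naive" answer would be; (ii) instead compute $(-\Delta)^{\frac32}w$ honestly via $(-\Delta)^{\frac32}w=-\Delta(-\Delta)^{\frac12}w$ (Lemma \ref{eq:lem}), using the explicit primitive of $w$; (iii) evaluate or estimate $(-\Delta)^{\frac12}w$ and its second derivative near $\pm1$, showing a boundary singularity incompatible with the constant $1$. An alternative, perhaps slicker, route is to argue by contradiction with uniqueness: if $(-\Delta)^{\frac32}w\equiv 1$ in $B$ then, since $w=0$ in $\R\setminus\overline B$ and $D^{s-2}w=0$ on $\partial B$ (as $w$ is bounded), the representation formula Theorem \ref{uniqueness:thm} — or directly Theorem \ref{green:thm} with $f\equiv1$ and the extra boundary term — would force $D^{s-1}w$ to match that of $u$; but one can compute $D^{s-1}w$ from the explicit $w$ (it is governed by $\lim_{x\to z}w(x)/\delta(x)^{s-1}=\lim w(x)/\delta(x)^{1/2}$, which vanishes since $w\in C^1_0$ with $w=O(\delta)$), whereas for $u$ the corresponding trace is $0$ as well — so this variant needs care and the direct computation in (iii) is the safer plan.

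\textbf{Main obstacle.} The hard part is making rigorous the claim that $(-\Delta)^{\frac12}w$ is not $C^2$ up to $\partial B$ and extracting enough of its near-boundary asymptotics to conclude $(-\Delta)^{\frac32}w\not\equiv 1$; this requires a careful local analysis of the hypersingular integral $(-\Delta)^{\frac12}w(x)$ as $x\to 1^-$, isolating the contribution of the corner of $w'$ at $\pm1$ (one expects a term behaving like $c\,\delta(x)^{\frac12}$ or $c\,\delta(x)^{\frac12}\log\delta(x)$, whose Laplacian is unbounded). Everything else — the explicit form of $w$, the regularity bookkeeping, and the application of Lemma \ref{eq:lem} and \eqref{dyda} — is routine.
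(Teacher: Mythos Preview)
Your opening steps match the paper: you compute $w$ explicitly by solving $-w''=\delta^{1/2}$ with $w(\pm1)=0$, note the jump of $w'$ at $\pm1$, and write $(-\Delta)^{3/2}w=-\Delta(-\Delta)^{1/2}w$ via Lemma~\ref{eq:lem}. From there, however, the paper proceeds very differently and avoids exactly the boundary asymptotic analysis you flag as the ``main obstacle''. Using \cite[Proposition~B.2]{AJS16b} the paper shifts one derivative inside to obtain $(-\Delta)^{3/2}w=-d\,(-\Delta)^{1/2}w'$, then splits
\[
w'(y)=-\tfrac12\Big(y\,\delta(y)^{1/2}+\sin^{-1}(y)\Big)\chi_{(-1,1)}(y).
\]
The first piece has \emph{known} fractional Laplacian $(-\Delta)^{1/2}[y\delta^{1/2}]=2y$ from \eqref{dyda}, so one gets the clean identity $(-\Delta)^{3/2}w(y)=1+\frac12\, d\,(-\Delta)^{1/2}\zeta(y)$ with $\zeta=\sin^{-1}(\cdot)\chi_{(-1,1)}$. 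The problem thus reduces to showing $(-\Delta)^{1/2}\zeta$ is not constant on $B$; this is dispatched by a contradiction with the representation result \cite[Theorem~1.5]{AJS17a}, since $(-\Delta)^{1/2}\zeta\equiv c$ would force $\zeta=c\,\delta^{1/2}$, which is false. No asymptotic expansion near $\partial B$ is needed.

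Two further comments on your sketch. First, the contrapositive you write (``if $(-\Delta)^{1/2}w$ were $C^2(\overline B)$ then Lemma~\ref{eq:lem} would give $-\Delta(-\Delta)^{1/2}w=(-\Delta)^{1/2}(-\Delta)w$'') is not what Lemma~\ref{eq:lem} says: that lemma only identifies $(-\Delta)(-\Delta)^{s-1}u$ with the finite-difference operator, not the reversed composition; the interchange fails here precisely because $(-\Delta)w$ is not defined on all of $\R$ (the jump of $w'$), regardless of how smooth $(-\Delta)^{1/2}w$ is. Second, your uniqueness alternative is more promising than you suggest: one can check $D^{s-2}w=D^{s-1}w=0$ on $\partial B$ (since $w=O(\delta)$), so if $(-\Delta)^{3/2}w\equiv1$ then $w$ and $u$ would both satisfy \eqref{mp} with $f\equiv1$, hence $w=u$ by Theorem~\ref{uniqueness:thm}; but this requires verifying $\delta^{1/2}w\in C^{1+\alpha}(\overline B)$ and $(-\Delta)^{3/2}w\in C^{\alpha}(\overline B)$ for some $\alpha>0$, and the second hypothesis is essentially what you are trying to prove. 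Your direct asymptotic route (iii) could in principle succeed, but the paper's algebraic shortcut via the $w'$ decomposition is substantially cleaner.
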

\begin{proof}
Let $\sin^{-1}$ denote the arcsine function and $d$ and $'$ denote derivatives. By direct computation, we have that
\begin{align*}
 W(y)=\frac{1}{12} \left(-2 \sqrt{1-y^2} \left(y^2+2\right)-6 y \sin ^{-1}(y)+3 \pi \right)\chi_{(-1,1)}(y)
\end{align*}
is the unique solution of 
\begin{align}\label{eq}
 -d^2W=-W''=\delta^{\frac{1}{2}}\quad \text{ in }(0,1),\qquad W(1)=0=W'(0),
\end{align}
and therefore, by \eqref{w1:def},
\begin{align*}
W(x)=w(x)=\int_{B}G(x,z)\delta^{\frac{1}{2}}(z)\ dz\qquad \text{ for }x\in B. 
\end{align*}
Then
\begin{align*}
 w'(y)=\frac{1}{2} \left(-\sqrt{1-y^2}y-\sin ^{-1}(y)\right)\chi_{(-1,1)}(y)\qquad \text{ in }\R\backslash\{\pm 1\}
\end{align*}
and $w'$ has a jump discontinuity\footnote{Thus $w'$ is not weakly differentiable in $\R$ and one cannot compute $(-\Delta)^{\frac{1}{2}}(-\Delta)w=(-\Delta)^{\frac{1}{2}}(-d^2)w$.} at $\pm 1$. By \cite[Corollary 1.4]{AJS17b} and \cite[Proposition B.2]{AJS16b} we have that
\begin{align*}
(-\Delta)^{1+{\frac{1}{2}}}w=-d^2(-\Delta)^{{\frac{1}{2}}}w=-d(-\Delta)^{{\frac{1}{2}}}dw
=-d(-\Delta)^{{\frac{1}{2}}}w'\qquad \text{ in }(-1,1). 
\end{align*}
By linearity and using \eqref{dyda},
\begin{align*}
 (-\Delta)^{\frac{1}{2}} [\frac{1}{2} \left(-\delta(y)^{\frac{1}{2}}  y-\sin ^{-1}(y)\right)\chi_{(-1,1)}]
 =\frac{1}{2}\left(-2y-(-\Delta)^{\frac{1}{2}}[\sin ^{-1}(y)\chi_{(-1,1)}(y)]\right),
\end{align*}
therefore,
\begin{align*}
	(-\Delta)^{\frac{3}{2}} w(y)=-d(-\Delta)^{\frac{1}{2}} d w(y) = 1 + d(-\Delta)^{\frac{1}{2}}[\sin ^{-1}(y)\chi_{(-1,1)}(y)]\qquad \text{ for }y\in(-1,1).
\end{align*}
Let $\zeta(y)=\sin ^{-1}(y)\chi_{(-1,1)}(y)$. To finish the proof, we show that $d(-\Delta)^{\frac{1}{2}}\zeta \neq 0$ in $(-1,1)$. Assume by contradiction that $d (-\Delta)^{\frac{1}{2}}\zeta=0$ in $B$, then, $(-\Delta)^{\frac{1}{2}}\zeta=c$ in $B$ for some $c\in\R$.  Now, we reach a contradiction using a representation result
\footnote{More direct approaches are also possible, but they require lengthy computations.  For example, since $\zeta$ has a jump discontinuity at $\pm1$, one can show that $\lim_{x\to 1^-}(-\Delta)^\frac{1}{2}\zeta(x)=\infty$.  Or, using integration by parts and the mean value theorem (see \cite[Lemma B.1]{AJS16b}), one can even compute explicitly the value of $(-\Delta)^\frac{1}{2}\zeta(x)$ at any $x\in(-1,1)$, but to keep this survey short we do not include the details of these calculations.}. Since $\delta^{{\frac{1}{2}}}\zeta\in C^\frac{1}{2}([-1,1])$, $\zeta\in L^\infty([-1,1])$, and $(-\Delta)^{\frac{1}{2}} \zeta\in C^\frac{1}{2}([-1,1])$, (actually, $(-\Delta)^{\frac{1}{2}} \zeta=c\in C^\infty([-1,1])$), we can use the representation of solutions \cite[Theorem 1.5]{AJS17a} and obtain that
\begin{align*}
 \sin ^{-1}(y)\chi_{(-1,1)}(y)=\zeta(y)=\int_BG_{\frac{1}{2}}(x,y) c\ dy = c(1-|y|^2)_+^{\frac{1}{2}}\qquad \text{ in }\R,
\end{align*}
a contradiction.  Therefore $d(-\Delta)^{\frac{1}{2}}[\sin ^{-1}(y)\chi_{(-1,1)}]\neq 0$ in $B$ and $(-\Delta)^sw\neq 1$ in $B$. 
\end{proof}

\subsection*{Acknowledgements}

I thank my coauthors and friends Nicola Abatangelo and Sven Jarohs for their comments and suggestions on how to improve this survey. 


\end{document}